\documentclass[DIV=12,10pt]{scrartcl}

\usepackage[breaklinks=true]{hyperref}
\usepackage{xcolor}
\usepackage{amsmath,amsthm,amsfonts,amssymb}
\usepackage{mathdots}
\usepackage{mathtools}
\usepackage{enumitem}
\usepackage{longtable}
\usepackage{caption}
\usepackage{afterpage}
\usepackage{stackengine}
\usepackage[capitalize]{cleveref}

\definecolor{darkblue}{rgb}{0.0, 0.0, 0.6}

\hypersetup{
  pdfauthor={Andrei Smolensky},
  pdftitle={Cayley graph diameters for fixed cycle types are eventually quasipolynomial},  
  colorlinks=true,
  urlcolor=darkblue,
  linkcolor=darkblue,
  citecolor=darkblue}

\usepackage[backgroundcolor=white,linecolor=lightgray]{todonotes}

\newtheorem{theorem}{Theorem}
\newtheorem*{theorem*}{Theorem}
\newtheorem{lemma}{Lemma}
\newtheorem{corollary}{Corollary}
\newtheorem*{corollary*}{Corollary}
\theoremstyle{definition}
\newtheorem{definition}{Definition}
\newtheorem*{definition*}{Definition}
\newtheorem{remark}{Remark}
\newtheorem*{remark*}{Remark}

\crefalias{equations}{equation}
\crefname{equations}{equations}{equations}
\creflabelformat{equations}{#2\textup{(#1)}#3} 

\newcommand{\cvec}{\mathbf{c}}
\newcommand{\mvec}{\mathbf{m}}
\newcommand{\evec}{\mathbf{e}}

\newcommand{\dict}{\mathcal{A}}

\newcommand{\cc}{\mathcal{C}}

\title{Cayley graph diameters for fixed cycle types are eventually quasipolynomial}
\author{Andrei Smolensky\thanks{Neapolis University Pafos, Cyprus, \texttt{andrei.smolensky@gmail.com}}}

\begin{document}
\maketitle

\begin{abstract}
Given a cycle type, the corresponding conjugacy class of $S_n$ generates either $S_n$ or $A_n$ for sufficiently large $n$. We prove that the sequence of diameters of Cayley graphs is eventually polynomial on residue classes for any fixed cycle type. This result is also extended to finite unions of conjugacy classes.
\end{abstract}


\section*{Introduction}

Fix a non-empty multiset of integers $\lambda = (\lambda_1,\ldots,\lambda_h)$ with every $\lambda_i\geqslant 2$, and consider the set $\cc_{\lambda,n}$ of all $n$-letter permutations of cycle type $\lambda$. In cycle notation we usually omit cycles of length $1$.

The sequence $D_n$ of diameters of Cayley graphs of the groups generated by $\cc_{\lambda,n}$ has gained some attention recently \cite{MO512777}. It was conjectured to be an instance of the \emph{quasipolynomiality phenomenon:} that this sequence is \emph{eventually polynomial on residue classes,} i.e. that there exist a starting point $N$ and a period $P$ such that $D_n$ restricted to $n\in N+r+P\mathbb{N}$ is a polynomial for each $r=0,\ldots,P-1$.

Specifically for conjugacy classes in $A_n$, this sequence was previously shown by M.~Liebeck and A.~Shalev \cite[Lemma~3.2]{LiebeckShalevDiametersSharpBounds} to be $O(n)$, so these polynomials must be of degree at most $1$.

\begin{definition}
An integer sequence $a_n$ is \emph{eventually affine PORC} (with a common positive slope) if there exist positive integers $P,q,N$ such that $a_{n+P} = a_n+q$ whenever $n\geqslant N$. Equivalently, $a_n = (q/P)n + c_{n \bmod P}$ for some rational constants $c_0,\ldots,c_{P-1}$ and $n\geqslant N$.
\end{definition}

The main result of the present paper is the following:
\begin{theorem}
	\label{thm:diameters-affine-porc}
	For every fixed non-trivial cycle type $\lambda$, the diameters $D_n$ are eventually affine PORC.
\end{theorem}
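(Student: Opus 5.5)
Throughout, the word length of a permutation $\sigma$ with respect to $\cc_{\lambda,n}$ depends only on the cycle type of $\sigma$, because $\cc_{\lambda,n}$ is closed under conjugation. So $D_n$ is the maximum, over all cycle types $\mu$ of permutations lying in $G_n=\langle \cc_{\lambda,n}\rangle$, of the least $k$ such that some product of $k$ elements of $\cc_{\lambda,n}$ has cycle type $\mu$. I plan to encode cycle types as multiplicity vectors, $\mu \leftrightarrow \mvec=(m_1,m_2,\ldots)$, where $m_j$ is the number of $j$-cycles and $\sum_j j\,m_j=n$. The aim is to reduce the statement to a Presburger-definability or Frobenius-type argument. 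The starting point is the Liebeck--Shalev bound $D_n=O(n)$: if every element of $G_n$ has length at most $Cn$, then (up to controlled error) $D_n$ only has to be computed over a "bounded-complexity" set of cycle types.

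\textbf{Step 1 (local moves).} I would show that a product $g_1\cdots g_k$ of $k$ elements of type $\lambda$ acts nontrivially on at most $k\,|\lambda|$ points, where $|\lambda|=\sum\lambda_i$, and that the cycle structure of the product restricted to its support is one of finitely many patterns. Consequently the set $R_k$ of cycle types reachable in exactly $k$ steps is described by a finite "dictionary" $\dict$ of bounded gadgets, namely pairs (local cycle type, number of letters used). Reachability in $k$ steps then means that $\mu$ decomposes as a sum of gadget contributions using $k$ generators in total, with the fixed points absorbing the rest.

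The obstacle is that gadgets need not combine additively: products of generators can merge cycles across gadgets. My first attempt at handling this is to argue that long cycles of $\mu$ can always be built by chaining, so that each target cycle of length $j$ costs $a j+b$ generators for constants depending on $\lambda$ and on parities. This should give an explicit formula
$$\ell(\mu)=\min\Big\{\textstyle\sum_{i} c_i x_i : x\in\mathbb{N}^{\dict},\ \text{linear constraints linking } x \text{ to } \mvec\Big\}$$
in which all constraints are Presburger-definable uniformly in $n$.

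\textbf{Step 2 (quasipolynomiality).} Given this description, $D_n=\max_{\mvec:\sum j m_j=n}\ell(\mvec)$ is a Presburger-definable function of $n$, so it is eventually quasipolynomial by the standard results (Woods' theorem on Presburger counting/optimization functions). Combined with $D_n=O(n)$, $D_n$ is eventually of the form $\alpha_{n\bmod P}\,n+c_{n\bmod P}$. To get a common slope $q/P$, I would observe that $D_{n+1}\ge D_n$ up to a bounded error; for example, adding a fixed point does not change lengths, and $A_n\hookrightarrow A_{n+1}$. This forces all residue classes to have the same leading coefficient. Positivity of the slope follows from the lower bound $D_n\ge (n-1)/|\lambda|$, since an $n$-cycle or $(n-1)$-cycle needs that many moved points.

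\textbf{Main obstacle.} The hard part is Step 1, which requires proving rigorously that the length function $\ell(\mvec)$ is given by a uniform bounded linear program, i.e. that there is no unbounded interaction between cycles. The cycle types involved have unbounded numbers of parts, so I would try to show that an optimal product can be rearranged into one with bounded "interaction width". I would prove this by an exchange argument on factorization trees (genus/Hurwitz-type counting: $\sum(\text{supports})$ versus $n-\#\text{cycles}$), establishing that the cost of a single $j$-cycle is eventually linear in $j$ with a periodic correction.
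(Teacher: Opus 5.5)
There is a genuine gap, and it sits in Step 1, which you flag as the main obstacle. Your whole plan rests on an exact, uniformly bounded integer-programming description of $L_n(\mu)$ for \emph{all} cycle types $\mu$. You do not prove this, and it is far stronger than the theorem.

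The finiteness you invoke does not hold as stated. Products of $k$ generators have finitely many support patterns only for fixed $k$, whereas $k$ must range up to $D_n\asymp n$.

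The chaining heuristic ``a target $j$-cycle costs $aj+b$'' treats costs as additive over target cycles, but they are not. Several target cycles are typically produced jointly by one connected word: already the baseline words of \cref{lemma:baseline_word} turn $M$ cycles of length $\ell$ into $\ell-1$ cycles of length $r$. So the optimal rate is an optimum over combined configurations, not a sum of per-cycle costs.

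Step 2 inherits the problem. A cycle type is a vector $(m_1,\ldots,m_n)$ whose dimension grows with $n$. The maximum is therefore not a Presburger-definable function of $n$ in a fixed number of variables. Woods' theorem does not apply until cycle types are compressed to boundedly many coordinates, which again requires controlling all interactions. Likewise, the appeal to Liebeck--Shalev to restrict to ``bounded-complexity'' types is unsupported.

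A smaller point concerns your monotonicity step. The embedding $G_n\hookrightarrow G_{n+1}$ only gives $L_{n+1}\leqslant L_n$, which is the wrong direction, and extra letters can act as workspace, so ``adding a fixed point does not change lengths'' is not justified. What does hold is $D_{n+1}\leqslant D_n+1$: one generator $f$ with $f(y)=g(y)$ creates a fixed point. That direction, run cyclically through the residue classes, already forces a common slope.

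The paper never computes $L_n(\mu)$; it sandwiches $D_n$.
\begin{itemize}
\item For each $r<b$ it obtains finiteness from Dickson's lemma applied to the pairs $(\cvec,E(\cvec,s))$ of connected constituent words in each residue class modulo $M=r+1$. Here $E\geqslant 0$ is precisely a Hurwitz-type count (\cref{lemma:connected-word-defect}), and padding by baseline words leaves $E$ unchanged. Hence every factorization producing $r^s$ is dominated by a disjoint union of copies of finitely many dictionary words (\cref{lemma:finite-domination}).
\item It follows that the best rate of producing $r$-cycles is the LP value $\Gamma_r$. This rate is attained exactly by the cyclic-rotation construction of \cref{lemma:sharp-bound}, which yields $D_n\geqslant\alpha_\lambda n-\gamma_\lambda$.
\item Conversely, a pigeonhole argument shows that for large $n$ every $g\in G_n$ has one of: $P$ fixed points; enough long-cycle mass to free $P$ points with $P/d$ peeling steps (\cref{lemma:long-cycle-peeling}); or $P/r$ cycles of some length $r<b$ removable at the optimal rate. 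This gives $D_n\leqslant D_{n-P}+\alpha_\lambda P$.
\item The integer sequence $D_{u+kP}-k\alpha_\lambda P$ is then eventually non-increasing and bounded below, hence eventually constant.
\end{itemize}
Your plan is missing these two ingredients: a finite dictionary for the optimal rate, and a matching lower bound and upper recurrence in place of an exact formula.
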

This theorem can also be extended to finite unions of conjugacy classes (see~\cref{sec:conjugacy-classes-unions} for the proof and further discussion). Both results are supplied with a Lean formalization\footnote{Formalization is available at \href{https://github.com/special-linear/conjugacy_diameters_porc_lean}{\texttt{github.com/special-linear/conjugacy\_diameters\_porc\_lean}}.}.

The property of diameters being PORC has been previously established for many explicit generating sets, and conjectured for many more based on the computational evidence \cite{CayleyPyGrowth}. To the best of our knowledge, this is the first case where quasipolynomiality is proven without establishing any sort of explicit formula for the diameter.

The diameters have been previously calculated for some specific cycle types and for some unions:
\begin{itemize}
\item For $\lambda=(2)$, the diameter with respect to all transpositions is trivially $n-1$.
\item More generally, for $\lambda=(3)$ and $\lambda=(4)$, it follows from the results of M.~Herzog and K.~B.~Reid \cite{HerzogReidCycles} that the diameters are $\lfloor n/2\rfloor$ and $\max(3, \lceil n/2\rceil)$, respectively.
\item The results of \cite{ProductsOfCycleClassesI,ProductsOfCycleClassesII} imply that the diameters for odd cycles of length $\ell>3$ are
\[
	\begin{cases}
		\lceil (n-1)/\nu_\ell\rceil + [n\equiv \nu_\ell+1 \pmod{2\nu_\ell}], & \text{if } \ell\equiv 0 \pmod{3}, \\
		\lceil (n-2)/\nu_\ell\rceil, & \text{if } \ell\equiv 1\pmod{3}, \\
		\lceil (n-1)/\nu_\ell\rceil + [n\equiv \nu_\ell+1 \pmod{4\nu_\ell}], & \text{if } \ell\equiv 2 \pmod{3},
	\end{cases}
	\quad\text{where}\quad
	\nu_\ell = \left\lfloor \frac{2\ell}{3} \right\rfloor.
\]
For even cycles, it follows from the authors' results that the diameters are (for $\ell\geqslant 4$ and $n\geqslant\ell+2$)
\[
	2\left\lceil \frac{n-1-[\ell\equiv 1\pmod{3}]}{2\nu_\ell} \right\rceil + \varepsilon_\ell(n),
	\quad\text{where}\quad
	\varepsilon_\ell(n) \in \{0,1\}.
\]
These results do no establish whether the sequence $\varepsilon_\ell(n)$ is periodic.
\item For conjugacy classes of elements of order $2$, that is, of cycle type $2^s$, the diameter formula
\[
	\left\lceil \frac{n-2}{s} \right\rceil + [\text{$s$ odd}]
\]
for $n\geqslant 4s$ is established in \cite{PhongpattanacharoenSiemons}.
\item In \cite{KMCycles}, the diameters with respect to all products of $k$ $\ell$-cycles are calculated for $\ell=2,3,4$, thus effectively calculating the diameters relative to some unions of conjugacy classes:
\[
	\left\lceil \frac{n-2}{k} + [\text{$k$ odd}] \right\rceil\ \text{for $\ell=2$}, \quad
	\left\lceil \frac{\lfloor n/2\rfloor}{k} \right\rceil\ \text{for $\ell=3$}, \quad
	\left\lceil \frac{n-2}{2k} + [\text{$k$ odd}] \right\rceil\ \text{for $\ell=4$}.
\]
\item Diameters relative to all permutations supported on at most $k$ letters have been shown to be $\lceil (n-1)/(k-1) \rceil$ in \cite[Lemma~14]{Mastermind}.
\item In \cite{CicirelloCycleMutation} the diameters with respect to all cycles of length at most $\ell$ are discussed, and it is shown that for $\ell=3,4$ the diameters are $\lfloor n/2\rfloor$.
\end{itemize}

\section{Proof of \cref{thm:diameters-affine-porc}}

Let us now introduce the notation used in the paper.

For each $\ell\geqslant 2$, let $m_\ell = \#\{i \mid \lambda_i=\ell\}$ denote the number of cycles of length $\ell$, and set
\begin{align*}
	& J = \{ \ell \mid m_\ell > 0 \} && \text{the set of non-trivial cycle lengths present in $\cc_{\lambda,n}$,} \\
	& b = \sum_{i=1}^h \lambda_i =\sum_{\ell\in J} \ell m_\ell && \text{the generator support size,} \\
	& h = \sum_{\ell\in J} m_\ell && \text{the number of non-trivial cycles,} \\
	& d = b-h = \sum_{\ell\in J} (\ell-1) m_\ell && \text{the length of the generators relative to all transpositions.}
\end{align*}

Denote $G_n = \langle \cc_{\lambda,n} \rangle$. For $n\geqslant b+1$, $G_n = S_n$ if $d$ is odd, and $A_n$ if $d$ is even.

Define $L_n(g)$ as the length of the shortest expression of $g$ as a word in elements of $\cc_{\lambda,n}$. The diameter is defined as $D_n = \max_{g\in G_n} L_n(g)$.

The main idea of the proof is that, for sufficiently large $n$, a permutation has a long cycle (of length at least $b$) or a repeated short cycle, and both can be either shortened or removed entirely using a bounded number of generators. This can be done using a finite ``dictionary'', which reduces the problem of numerical bounds to finite rational optimization.

\begin{definition}
A \emph{constituent word} is a finite ordered list $w=(a_1,\ldots,a_K)$, where each $a_i$ is a cycle of some length in $J$. Its \emph{value} is the product $a_1\cdots a_K$. Denote by $c_\ell(w)$ the number of $\ell$-cycles in $w$, and denote by $\cvec(w) = \big( c_\ell(w) \big)_{\ell\in J}$ its \emph{resource vector}. Denote
\[
\delta(c) = \sum_{\ell\in J} (\ell-1) c_\ell,
\]
the length of the obvious decomposition of the value into transpositions coming from the decomposition of each cycle factor (the product can of course have a shorter decomposition).
\end{definition}

If a word of $q$ generators of type $\lambda$ is expanded into its individual cycle factors, it becomes a constituent word with resource vector $q\mvec$, where $\mvec = (m_\ell)_{\ell\in J}$.

Fix $r\geqslant 2$ and consider the words with value of cycle type $r^s$. When this is the case, we say that $w$ has \emph{target-cycle count} $s$. Denote also $M=r+1$.

\begin{lemma}
	\label{lemma:baseline_word}
	For each $\ell\in J$ there exists a constituent word with exactly $M$ factors, all $\ell$-cycles, whose value has type $r^{\ell-1}$ on $r(\ell-1)+1$ letters. Its resource vector is $M\evec_\ell$, where $\evec_\ell$ is the $\ell$-th coordinate unit vector, and its target-cycle count satisfies
	\begin{equation}
		\label{eq:baseline-word-target-cycle-count}
		M(\ell-1) = \delta(M\evec_\ell).
	\end{equation}
\end{lemma}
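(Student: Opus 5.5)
The plan is to build the word explicitly, and to check the numerology first because it forces the shape of the construction. Write $N=r(\ell-1)+1$. A product of $M=r+1$ $\ell$-cycles that equals a permutation $\pi$ of type $r^{\ell-1}$ fixing one of the $N$ letters amounts to a relation $a_1\cdots a_M\pi^{-1}=1$. In this relation, each $a_i$ has transposition length $\ell-1$, and $\pi^{-1}$ has transposition length $N-\ell=(r-1)(\ell-1)$. The total is
\[
(r+1)(\ell-1)+(r-1)(\ell-1)=2r(\ell-1)=2(N-1).
\]
If the group generated by the factors is transitive on the $N$ letters, this is exactly the Riemann--Hurwitz count for a genus-$0$ covering. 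So we are looking for a planar (genus-$0$) constellation, and the lower bound also shows that no fewer letters would work. Equation~\eqref{eq:baseline-word-target-cycle-count} then needs no argument: the resource vector is $M\evec_\ell$ by construction, and $\delta(M\evec_\ell)=M(\ell-1)$ by definition.

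For the construction itself I would first try an induction on $r$. Take as a degenerate base an $\ell$-cycle $a$ followed by $a^{-1}$; this is a product of two $\ell$-cycles equal to the identity on $\ell$ letters (formally type $1^{\ell-1}$ plus a fixed point). Assume a word of $r$ $\ell$-cycles with value $\pi_{r-1}$, of type $(r-1)^{\ell-1}$ on $(r-1)(\ell-1)+1$ letters, fixing a letter $f$. Adjoin $\ell-1$ fresh letters $z_1,\ldots,z_{\ell-1}$, and multiply by one more $\ell$-cycle $\tau$ on $\ell$ letters. The aim is that $\tau$ lengthens each of the $\ell-1$ cycles of $\pi_{r-1}$ by exactly one letter, absorbing $z_i$ into the $i$-th cycle. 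This requires choosing $\tau$ so that it meets every old cycle and $f$ suitably, and the right multiplication of $\pi_{r-1}$ by $\tau$ splits and reconnects so that each old cycle ends up grafted with one $z_i$. The count permits this: $\tau$ contributes $\ell-1$ transpositions and we need exactly $\ell-1$ joins.

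The main obstacle is this inductive step. The naive choices do not work: a $\tau$ hitting each old cycle once merges everything into one long cycle, and a $\tau$ supported on the new letters merges the $z_i$ together. So the right choice probably requires modifying the last factor of the previous word rather than merely appending a new factor. I would try conjugating or re-choosing $\tau$ together with the preceding cycle, using that only the product of the last two factors matters. If I cannot find an explicit pattern, the fallback is the genus-$0$ Hurwitz existence theory. By the count above, the branch data $\bigl((\ell,1^{N-\ell})^{M},\,(r^{\ell-1},1)\bigr)$ is a candidate planar datum, and one shows it is realizable. One route is a planar cactus or hypermap drawing: $M$ polygons with $\ell$ vertices each, glued in a tree-like way so that the faces give $\ell-1$ cycles of length $r$ and one fixed point. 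The other route is to invoke the known realizability results for genus-$0$ data in which all but one partition is a single cycle plus fixed points. Either route yields the word, and the resource vector and \eqref{eq:baseline-word-target-cycle-count} follow as noted above.
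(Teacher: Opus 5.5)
\paragraph{The gap.} Your proposal never exhibits the word, and that is the whole content of the lemma. As you note, the resource vector and \eqref{eq:baseline-word-target-cycle-count} are immediate once the word exists.

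The Riemann--Hurwitz count is only a necessary condition. Genus-$0$ data satisfying it can be non-realizable: for example $(2,2),(2,2),(3,1)$ in degree $4$, because a product of two double transpositions in $S_4$ lies in the Klein four-group and so is never a $3$-cycle.

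Your inductive step fails for a structural reason, not only for the naive choices. Going from $\pi_{r-1}$ (with the $z_i$ as extra fixed points) to the target lowers the number of cycles by exactly $\ell-1$. Hence all $\ell-1$ transpositions in a decomposition of $\tau$ must act as joins. Since they form a tree on $\mathrm{supp}\,\tau$, they fuse every cycle that $\tau$ meets into a single cycle and leave the other cycles untouched. This holds for multiplication on either side. So the product has only one cycle that was not already a cycle of $\pi_{r-1}$, while you need $\ell-1$ new cycles of length $r$. For $\ell\geqslant 3$ no choice of $\tau$ works.

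The proposed repair (re-choosing the last two factors) is not carried out. The fallback names neither a concrete cactus nor a specific realizability theorem whose hypotheses are checked for $\bigl((\ell,1^{N-\ell})^{M},(r^{\ell-1},1)\bigr)$. So existence remains unproved.

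\paragraph{The missing idea.} Reverse the order of operations: first build one long cycle using only joins, then cut it with a single $\ell$-cycle that performs $\ell-1$ splits at once.

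Set $u=\ell-1$ and $v=ru+1$, and compose from right to left. The $\ell$-cycles $B_j=(ju+1,ju+2,\ldots,(j+1)u+1)$, $0\leqslant j<r$, form a chain in which consecutive cycles share an endpoint, so $B_0B_1\cdots B_{r-1}=(1,2,\ldots,v)$.

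The $\ell$-cycle $T=(1,1+r,1+2r,\ldots,1+ur)$ then gives $T^{-1}B_0\cdots B_{r-1}=(1,\ldots,r)(r+1,\ldots,2r)\cdots((u-1)r+1,\ldots,ur)$, with $v$ fixed. Thus $(T^{-1},B_0,\ldots,B_{r-1})$ is the required word with $M=r+1$ factors. This is the paper's proof.

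It is also the explicit planar cactus your fallback was gesturing at. It shows why an induction on $r$ must alter an earlier factor: passing from $r-1$ to $r$ appends $B_{r-1}$ but replaces $T$ altogether.
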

\begin{proof}
Denote $u=\ell-1$ and $v=ru+1$, and set
\[
B_j = (ju+1,ju+2,\ldots,(j+1)u+1) \quad \text{for} \quad 0\leqslant j <r,
\]
so that $B_0=(1,2,\ldots,\ell)$, $B_{r-1} = (v-\ell+1,\ldots,ru+1)$, and
\[
T = (1,1+r,1+2r,\ldots,1+ur).
\]
The permutations $B_j$ and $T$ are all $\ell$-cycles. A straightforward calculation shows that
\begin{gather*}
	B_0B_1\cdots B_{r-1} = (1,2,\ldots,v), \\
	T^{-1}B_0B_1\cdots B_{r-1} = (1,2,\ldots,r)(r+1,\ldots,2r)\cdots((u-1)r+1,\ldots,ur). \qedhere
\end{gather*}
\end{proof}
The words provided by the above lemma will be called \emph{baseline words.}

Given a constituent word, define its \emph{support graph} by taking the letters appearing in its factors for vertices, and joining letters appearing together in some factor. Each factor is thus contained in a connected component. Moreover, the value of type $r^s$ can be restricted to any component, producing a permutation of type $r^{s'}$.

\begin{definition}
	\label{def:ecs}
Given a constituent word with resource vector $\cvec$ and target-cycle count $s$, denote $E(\cvec,s) = \delta(\cvec)-Ms+2$.
\end{definition}
\begin{lemma}
	\label{lemma:connected-word-defect}
	Suppose a constituent word has connected support graph, resource vector $\cvec\neq 0$ and value of type $r^s$ on $v=rs+f$ vertices of its support graph. Then
	\[
		E(\cvec,s) \geqslant 2f \geqslant 0.
	\]
\end{lemma}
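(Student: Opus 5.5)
The plan is to pass from the constituent word to a product of transpositions and apply a Riemann--Hurwitz type count. Write each $\ell$-cycle factor $(x_1,\ldots,x_\ell)$ as a product of $\ell-1$ transpositions, for instance $(x_1,x_2)(x_2,x_3)\cdots(x_{\ell-1},x_\ell)$ up to the chosen multiplication convention. This gives a word $t_1\cdots t_\delta$ in transpositions with $\delta=\delta(\cvec)$ and the same value. The transpositions coming from one factor join all letters of that factor into a path. Hence the graph on the $v$ support letters whose edges are the $t_i$ has the same connected components as the support graph, so it is connected. The value has type $r^s$ on these $v$ letters, so as a permutation of the support it has exactly $k=s+f$ cycles, counting the $f$ fixed letters.

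The key step is the inequality $\delta\geqslant v+k-2$ for any transposition word on $v$ letters with connected graph whose value has $k$ cycles. I will prove it by multiplying in the transpositions one at a time. Track two quantities: the number of cycles of the partial product, and the number of connected components of the graph formed by the transpositions used so far. Both start at $v$. Each cycle of the partial product lies inside one component, since the partial product lies in the group generated by the transpositions used so far.

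Consider the effect of one new transposition. If it joins two different components, its two letters lie in different cycles, so the cycle count drops by exactly $1$ and the component count drops by $1$. If it lies inside a component, the component count is unchanged and the cycle count changes by $\pm1$.

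At the end there is one component, so exactly $v-1$ steps are of the joining type. Let $b$ be the number of remaining steps. The final cycle count satisfies $k\leqslant v-(v-1)+b=1+b$. Therefore $\delta=(v-1)+b\geqslant v+k-2$.

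Substituting $v=rs+f$ and $k=s+f$ gives
\[
\delta(\cvec)\geqslant rs+f+s+f-2=Ms+2f-2,
\]
which is exactly $E(\cvec,s)=\delta(\cvec)-Ms+2\geqslant 2f$. The bound $2f\geqslant0$ is trivial.

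The only points needing care are bookkeeping ones, and they are where I expect the obstacle to lie. First, the cycle count must be taken on the whole support vertex set, including the fixed letters. Second, the hypothesis $\cvec\neq0$ guarantees the support is non-empty, so $v\geqslant 2$ and the component argument makes sense. Third, the case analysis for joining steps must be justified by the observation that cycles never straddle components.
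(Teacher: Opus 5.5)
Your proof is correct and is essentially the paper's argument: both compare the cycle count of the partial products with the number of components of the transposition graph, and both use that each of the $v-1$ component-merging steps is a cycle join. The only difference is cosmetic. The paper first prepends $g^{-1}$ (as $(r-1)s$ transpositions) so that the word multiplies to the identity and then uses $\#\mathrm{joins}=\#\mathrm{splits}$, whereas you bound the final cycle count $k=s+f$ directly, which gives the same inequality $\delta(\cvec)\geqslant Ms+2f-2$.
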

\begin{proof}
Let $V$ be the union of the supports of the constituent factors, so that $|V|=v=rs+f$. Write the word value as $g=a_1\cdots a_K$. The word $g^{-1}\cdot a_1\cdots a_K = 1$ can be expanded into a transposition word 
\[
	t_1\cdots t_L = 1, \quad L=\delta(\cvec)+(r-1)s.
\]
Indeed, $g^{-1}$ is a product of $s$ disjoint $r$-cycles.

Consider the partial products $P_0=1$, $P_k = P_{k-1}t_k = t_1\cdots t_k$. Consider also the sequence $H_k$ of graphs on $V$ with edges corresponding to the transpositions $t_1,\ldots,t_k$ (in particular, $H_0$ is edgeless). Each $P_k$ preserves every connected component of $H_k$, and each cycle of $P_k$ is contained within a component of $H_k$.

The number of graph components starts at $v$ and ends at $1$. Indeed, the transpositions coming from the factors $a_1,\ldots,a_K$ already connect all vertices. Adding an edge decreases this number by $1$ precisely when its endpoints lie in different components. Thus there are $v-1$ steps that merge all graph components. At each such step $k$, the endpoints lie in different components of $H_{k-1}$, hence in different cycles of $P_{k-1}$.

In general, when a permutation is multiplied by a transposition, it either joins two of its cycles into one, or splits one cycle into two, depending on what cycles the support of this transposition intersects. For the identity word above and its partial products, the number of cycles starts at $v$ (all points are fixed) and returns to $v$. Hence there are equally many joins and splits. Thus
\[
L = \#\mathrm{joins} + \#\mathrm{splits} = 2\cdot\#\mathrm{joins} \geqslant 2(v-1).
\]
The last inequality holds because the merging of components in $H_k$ happens when two cycles of $P_{k-1}$ are joined.
Substituting $v=rs+f$ yields $\delta(\cvec)+(r-1)s \geqslant 2(rs+f-1)$, and the claimed inequality follows by rearranging.
\end{proof}

We shall use the following form of Dickson's lemma:
\begin{lemma}
	\label{lemma:dickson}
	Let $\mathbb{N}^k$ be ordered coordinate-wise: $u\leqslant v$ if and only if $u_i\leqslant v_i$ for all $i$. Then every non-empty $X\subseteq \mathbb{N}^k$ has finitely many minimal elements, and every $x\in X$ lies above one of them (i.e. for any $x\in X$ there exists a minimal element $y$ such that $y\leqslant x$).
\end{lemma}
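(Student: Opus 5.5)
We prove Dickson's lemma by induction on $k$. The first claim follows from the stronger statement that every infinite sequence $x^{(1)},x^{(2)},\ldots$ in $\mathbb{N}^k$ contains an infinite non-decreasing subsequence. From this, note that the minimal elements of $X$ are pairwise incomparable. If there were infinitely many, we could list them as an infinite sequence. Its non-decreasing subsequence would then give two distinct comparable minimal elements, a contradiction.

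The subsequence statement is proved by induction on $k$. For $k=1$, a sequence of naturals either takes some value infinitely often, giving a constant subsequence, or is unbounded, giving an increasing subsequence. Either way we get a non-decreasing subsequence. For the inductive step, first pass to a subsequence that is non-decreasing in the first coordinate. Then apply the inductive hypothesis to the remaining $k-1$ coordinates of this subsequence. Extracting a further subsequence preserves monotonicity in the first coordinate, so the result is non-decreasing in all $k$ coordinates.

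For the second claim, take $x\in X$ and consider $X_x=\{y\in X\mid y\leqslant x\}$. This set is non-empty, since it contains $x$. It is finite, since it lies in the box $\prod_i\{0,\ldots,x_i\}$. A finite non-empty poset has a minimal element $y$. This $y$ is also minimal in all of $X$: if $z\leqslant y$ with $z\in X$, then $z\leqslant x$, so $z\in X_x$, and hence $z=y$. So $y$ is a minimal element of $X$ lying below $x$.

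Nothing here is a real obstacle. The only point needing care is to obtain finiteness from the pairwise incomparability of the minimal elements, rather than trying to enumerate them directly.
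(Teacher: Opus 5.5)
Your proof is correct and complete. The paper gives no proof of this lemma; it quotes it as a standard form of Dickson's lemma, so there is no competing argument to compare against.

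Your route is the standard one, and it handles the two claims with different tools:
\begin{enumerate}
\item Finiteness of the minimal elements comes from the partial-well-order property of $\mathbb{N}^k$, which you obtain by iterated subsequence extraction. It rules out an infinite antichain, and the minimal elements form an antichain.
\item The existence of a minimal element below each $x$ needs only the much weaker fact that the down-set $\{y\in X \mid y\leqslant x\}$ is finite.
\end{enumerate}
Both parts are used later in the paper. Finiteness of the minimal elements of each $X_\rho$ is what makes the dictionary $\dict_r$ finite. The existence of a minimal element below each point is what \cref{lemma:finite-domination} invokes.

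The one implicit step is that your enumeration of the hypothetically infinite set of minimal elements must be injective. Otherwise the two comparable terms of the non-decreasing subsequence need not be distinct. This is harmless, since that set is an infinite subset of the countable set $\mathbb{N}^k$ and so admits an injective enumeration.
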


For each residue vector $\rho \in(\mathbb{Z}/M\mathbb{Z})^J$, let $X_\rho$ consist of pairs $\big( \cvec, E(\cvec,s) \big) \in \mathbb{N}^{|J|+1}$ arising from non-empty connected constituent words with value of type $r^s$ with any $s$ and with $\cvec \equiv \rho \pmod{M}$. By \cref{lemma:connected-word-defect} they are indeed elements of $\mathbb{N}^{|J|+1}$.

For each coordinate-wise minimal pair of each non-empty $X_\rho$ select one word that realizes it. There are finitely many choices to be made. Form the set $\dict_r$ from these chosen words along with the baseline words provided by \cref{lemma:baseline_word}. This set will serve as a \emph{dictionary}.

We denote, for a word $A\in\dict_r$, its resource vector by $\cvec^A$, and its target-cycle count by $s_A$.

\begin{lemma}
	\label{lemma:finite-domination}
	For any constituent word with value of type $r^s$ there exists a union of dictionary words from $\dict_r$ with exactly the same resource vector and with at least $s$ $r$-cycles in its value, by possibly using extra letters.
\end{lemma}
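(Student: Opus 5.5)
Split the given constituent word along the connected components of its support graph. Each component gives a connected constituent word (the factors lying in it, in the original order) with resource vector $\cvec^{(i)}$ and value of type $r^{s_i}$, where $\sum_i \cvec^{(i)} = \cvec$ and $\sum_i s_i = s$. Components with $\cvec^{(i)}=0$ cannot occur, since every component contains a factor. It therefore suffices to prove the claim for a single connected word and then take the disjoint union (on fresh letters) of the dictionary words obtained for each component. Resource vectors add, and the $r$-cycle counts of values on disjoint supports add as well.

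So let $w$ be connected with resource vector $\cvec\neq 0$ and target-cycle count $s$, and let $\rho\equiv\cvec \pmod M$. The pair $(\cvec,E(\cvec,s))$ lies in $X_\rho$. By \cref{lemma:dickson} it dominates a minimal pair $(\cvec^A,E(\cvec^A,s_A))$ realized by some chosen word $A\in\dict_r$. Thus $\cvec^A\leqslant\cvec$ coordinatewise, $\cvec^A\equiv\cvec \pmod M$, and $E(\cvec^A,s_A)\leqslant E(\cvec,s)$. The difference $\cvec-\cvec^A=\sum_{\ell\in J} k_\ell M\evec_\ell$ has $k_\ell\in\mathbb{N}$. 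We take $A$ together with $k_\ell$ disjoint copies of the baseline word for $\ell$ from \cref{lemma:baseline_word}, all on fresh letters. This union has resource vector exactly $\cvec$.

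The remaining check is the cycle count. The union has target-cycle count $s' = s_A+\sum_\ell k_\ell(\ell-1)$. Since $\delta$ is linear and \eqref{eq:baseline-word-target-cycle-count} holds for each baseline word, we get
\[
E(\cvec,s') = \delta(\cvec^A) + \sum_\ell k_\ell\,\delta(M\evec_\ell) - Ms_A - \sum_\ell k_\ell M(\ell-1) + 2 = E(\cvec^A,s_A) \leqslant E(\cvec,s).
\]
Because $E(\cvec,\cdot)$ is strictly decreasing in its second argument (slope $-M$), this gives $s'\geqslant s$, as required.

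The main subtlety is the bookkeeping for the defect $E$. Baseline words have $E=2$ individually, so $E$ is not additive over disjoint unions: the constant $+2$ is counted once per piece. The computation above avoids this because it evaluates $E$ of the whole union at the total resource vector, where only one $+2$ appears. That single evaluation is exactly what the claim needs, so non-additivity never enters.

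A second point to verify is that the dominance from \cref{lemma:dickson} happens inside the same residue class $X_\rho$. Only this guarantees that $\cvec-\cvec^A$ is a nonnegative multiple of $M$ in each coordinate and hence can be filled exactly by baseline words.

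Finally, the per-component reduction: each component's word is connected and nonempty, so the argument applies to it, and summing over components preserves both the exact resource vector and the inequality on $r$-cycle counts.
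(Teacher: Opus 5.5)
Your proposal is correct and follows essentially the same route as the paper. You split into connected components, dominate each component's pair $(\cvec,E)$ by a dictionary pair in the same residue class via \cref{lemma:dickson}, and pad with $k_\ell$ baseline words; your computation $E(\cvec,s')=E(\cvec^A,s_A)\leqslant E(\cvec,s)$ is exactly the paper's identity $s'-s=(E-E^0)/M\geqslant 0$.
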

\begin{proof}
Decompose the original word into non-empty connected components. For the restriction on a single component, write $\cvec, s, E$ as prescribed in \cref{def:ecs}. In the same residue class $\cvec \bmod M$, select a realized dictionary pair $(\cvec^0,E^0)$ with $\cvec^0\leqslant\cvec$ and $E^0\leqslant E$. Such a pair exists by \cref{lemma:dickson}. Since $\cvec-\cvec^0$ is coordinate-wise non-negative and divisible by $M$, the integers
\[
k_\ell = \frac{c_\ell-c^0_\ell}{M}
\]
are non-negative. Take the dictionary word $w^0$ representing $(\cvec^0,E^0)$ and $k_\ell$ baseline words (with $M$ constituent factors, all of length $\ell$), for each $\ell$, on disjoint sets of letters. The total resource vector of this union is
\begin{equation}
	\label{eq:finite-domination:resource-vector}
	\cvec^0 + \sum_{\ell\in J} k_\ell M\evec_\ell = \cvec.
\end{equation}
Indeed, $c^0_\ell + k_\ell M = c^0_\ell + (c_\ell-c^0_\ell) = c_\ell$. If $s^0$ is the target-cycle count of $w^0$, then \cref{eq:baseline-word-target-cycle-count} gives the new count
\[
s' = s^0 + \sum_{\ell\in J} k_\ell(\ell-1) = \frac{\delta(\cvec)+2-E^0}{M}.
\]
The last equality comes from a combination of the following observations. First, $E^0=\delta(\cvec^0)-Ms^0+2$ by definition, so $Ms^0 = \delta(\cvec^0)+2-E^0$. Second, applying the linear function $\delta$ to \cref{eq:finite-domination:resource-vector} gives
\begin{gather*}
	\delta(c) = \delta(\cvec^0) + M\sum_{\ell\in J} k_\ell (\ell-1). \\
	\intertext{Combining them, one gets}
	Ms' = Ms^0 + M\sum_{\ell\in J} k_\ell (\ell-1) = \delta(\cvec^0)+2-E^0 + \delta(\cvec) - \delta(\cvec^0) = \delta(\cvec)+2 - E^0.
\end{gather*}
One gets from \cref{def:ecs} that $s = (\delta(\cvec)+2-E)/M$, hence $s'-s = (E-E^0)/M \geqslant 0$.

It remains to concatenate the words constructed for every connected component. The resource vectors and target-cycle counts add, because the words are realized on disjoint sets.
\end{proof}

Define
\begin{equation}
	\label{eq:gamma-r}
	\Gamma_r = \max\left\{ \sum_{A\in\dict_r} s_A x_A\ \middle|\ x_A\geqslant 0,\ \sum_{A\in\dict_r} \cvec^A x_A = \mvec \right\}.
\end{equation}
The variables $x_A$ are non-negative real weights. The above quantity measures the greatest possible average number of $r$-cycles produced per generator of cycle type $\lambda$.

\begin{remark}
	\label{rmk:gamma-r-positive-rational}
	$\Gamma_r$ is well-defined, positive, and rational.
\end{remark}
\begin{proof}
The feasible set in \cref{eq:gamma-r} is non-empty: assign weight $m_\ell/M$ to each baseline word $A$ (with $\cvec^A = M\evec_\ell$), and $0$ to other words, then the objective becomes
\[
	\sum_{\ell\in J} (\ell-1) \frac{m_\ell}{M} = \frac{d}{M} > 0.
\]
Every $\cvec^A\neq 0$ has coordinate sum at least $1$, while the right hand side of the linear constraint has coordinate sum $h$. Therefore any feasible point satisfies $\sum_A x_A\leqslant h$. The feasible set is closed, bounded and defined by rational equalities and inequalities, hence the conclusion.
\end{proof}

\begin{lemma}\leavevmode
	\label{lemma:sharp-bound}
	\begin{enumerate}
		\item Every factorization of a permutation of cycle type $r^s$ into $q\geqslant 1$ generators from $\cc_{\lambda,n}$, in any ambient degree $n$, satisfies $s\leqslant q\Gamma_r$.
		\item There exist integers $K_r,t_r,N_r>0$ and a factorization on $N_r$ letters of a permutation of cycle type $r^{K_r}$ into $t_r$ generators of type $\lambda$ with $\frac{t_r}{K_r} = \frac{1}{\Gamma_r}$.
	\end{enumerate}
\end{lemma}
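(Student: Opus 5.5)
For part~1, expand the factorization into its individual cycle factors. This gives a constituent word with resource vector $q\mvec$ whose value has type $r^s$. Apply \cref{lemma:finite-domination} to it. We obtain a disjoint union of dictionary words, say $n_A$ copies of each $A\in\dict_r$, with $\sum_A n_A\cvec^A = q\mvec$ and $\sum_A n_A s_A \geqslant s$. Dividing by $q$, the weights $x_A = n_A/q$ are feasible for \cref{eq:gamma-r}. Hence $s/q \leqslant \sum_A s_A x_A \leqslant \Gamma_r$, which is the claim. The ambient degree $n$ plays no role, since the dictionary argument is purely combinatorial on the support.

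For part~2, use that \cref{eq:gamma-r} is a rational linear program with bounded, non-empty feasible set (\cref{rmk:gamma-r-positive-rational}). Its maximum is therefore attained at a vertex $x^*$ with rational coordinates. Choose a common denominator $t$ with $n_A = t x^*_A\in\mathbb{N}$. Then $\sum_A n_A \cvec^A = t\mvec$ and $\sum_A n_A s_A = t\Gamma_r$, which is an integer.

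Take $n_A$ copies of each dictionary word $A$ on pairwise disjoint letter sets. This is a constituent word with resource vector $t\mvec$ whose value has type $r^{K}$ with $K=t\Gamma_r$. The remaining task is to regroup its cycle factors into $t$ generators of type $\lambda$, i.e.\ $t$ permutations each consisting of $m_\ell$ disjoint $\ell$-cycles, keeping the product unchanged.

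\textbf{Main obstacle.} Cycle factors from different generators generally cannot be freely reordered, and factors destined for the same generator must be disjoint. The plan is to place the words $A$ on disjoint supports, so that factors from distinct words commute. We are free to interleave the sequences of factors of the different words, preserving the internal order within each word.

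Within a single word $A$, consecutive factors may share letters, so they cannot sit in one generator. To handle this, use \emph{padding}: pad each generator with cycles from other words, or with dummy pairs. Inserting $a$ and later $a^{-1}$ on fresh letters changes the resource vector, so this is not quite free. The cleaner approach is to replace $t$ by a multiple $tN'$ and schedule greedily.

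Concretely, list all factors of all words. Build the generators in rounds: in round $j$, each generator collects $m_\ell$ $\ell$-cycles taken from distinct words. Taking many disjoint copies of the whole configuration (scaling $t$) makes such a scheduling possible. It resembles decomposing a regular bipartite multigraph into matchings, via Hall/König applied to an $\ell$-cycles-versus-slots incidence structure.

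The ratio $t/K = 1/\Gamma_r$ is unchanged by scaling. Set $t_r$ to the scaled $t$, $K_r$ to the scaled $K$, and $N_r$ to the total number of letters used. I expect this regrouping/scheduling step to be the real difficulty. If the greedy matching fails, first check whether the dictionary words' factors can always be taken pairwise disjoint after relabeling; conjugating later factors by earlier ones rewrites a word so that the order is irrelevant for counting purposes.
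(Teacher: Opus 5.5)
Your Part~1 is correct and is exactly the paper's argument. In Part~2, however, the proposal stops at the one step that carries real content. After forming the disjoint union of $n_A$ copies of each dictionary word, you have a constituent word $w=(a_1,\ldots,a_L)$ with $L=th$, resource vector $t\mvec$ and value of type $r^S$, $S=t\Gamma_r$ (your $K$). You must then actually produce a product of generators of type $\lambda$. Instead you only assert that ``taking many disjoint copies \ldots\ makes such a scheduling possible'', invoking Hall/K\"onig without setting up any bipartite structure or checking its hypotheses.

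This is not a formality. Taking identical copies of $w$ and letting a round consist of the $j$-th factor of every copy fails in general: that round consists of cycles of a single length, so its resource vector is a multiple of $\evec_\ell$ rather than of $\mvec$ as soon as $|J|\geqslant 2$. Your fallback also fails. If the factors of a word were pairwise disjoint, its value would be a product of disjoint cycles with lengths in $J$, so it could have type $r^s$ only if every factor were an $r$-cycle; for $r\notin J$ this is impossible (compare the baseline words of \cref{lemma:baseline_word}). Moreover, rewriting $ab=(aba^{-1})a$ only reorders factors and never makes overlapping supports disjoint. Finally, your framing ``keeping the product unchanged'' is too rigid: the statement only asks for \emph{some} permutation of type $r^{K_r}$.

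The missing idea is the paper's cyclic-rotation (Latin-square) arrangement. Let $V$ be the letter set of $w$, take disjoint copies $V_0,\ldots,V_{L-1}$, and on $V_p$ place a relabelled copy of the rotation $w_p=(a_{p+1},\ldots,a_L,a_1,\ldots,a_p)$.
\begin{itemize}
\item Each row's value is conjugate to that of $w$ (by $a_1\cdots a_p$), so it has type $r^S$.
\item The $j$-th column (the $j$-th factor of every row) contains each $a_i$ exactly once, on pairwise disjoint sets $V_p$. Hence it has resource vector $t\mvec$ and splits into $t$ commuting generators of type $\lambda$.
\item Multiply the columns in order. Factors of different rows commute, so you recover each row's factors in their original order, and the product equals the product of the $L$ row values.
\end{itemize}
This gives a permutation of type $r^{LS}$ written with $Lt$ generators on $L|V|$ letters, so $K_r=LS$, $t_r=Lt$, $N_r=L|V|$ and $t_r/K_r=1/\Gamma_r$. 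The rotation is exactly what balances every column to a multiple of $\mvec$. It is admissible because replacing the target by a conjugate and taking disjoint copies leave the ratio unchanged.
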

\begin{proof}[Proof of the lower bound]
Expand a $q$-generator factorization of a permutation of type $r^s$ into constituent cycles. Its resource vector is $q\mvec$. \cref{lemma:finite-domination} provides a collection of dictionary words (with multiplicities $z_A$) such that
\[
	\sum_{A\in\dict_r} \cvec^A z_A = q\mvec, \quad \sum_{A\in\dict_r} s_A z_A \geqslant s.
\]
Set $x_A = z_A/q$, so that they provide a feasible point in \cref{eq:gamma-r}. Then
\[
	\frac{s}{q} \leqslant \sum_{A\in\dict_r} s_A x_A \leqslant \Gamma_r.
\]
\emph{Proof of finite attainment.} Choose a rational optimal solution $x^*$. Let $Q\geqslant 1$ clear all its denominators, and set $z_A = Q x^*_A\in\mathbb{Z}_{\geqslant 0}$. Concatenate $z_A$ copies of each dictionary word (on disjoint letters). The resulting constituent word $w=(a_1,\ldots,a_K)$ satisfies
\[
	\cvec(w) = Q\mvec, \quad K=Qh, \quad \text{value of $w$ has type $r^S$, where $S=\sum_A s_A z_A = Q\Gamma_r \in\mathbb{Z}_{>0}$.}
\]

The constituent factors in this word may overlap, so we shall now construct a decomposition for a new permutation of type $r^{KS}$, which will be decomposed into $KQ$ generators of type $\lambda$.

Let $V$ be the finite set of letters appearing in $w$. Take $K$ mutually disjoint copies $V_0,\ldots,V_{K-1}$ of $V$. On $V_p$, place a relabelled copy of the cyclically shifted word $w_p = (a_{p+1},\ldots,a_K,a_1,\ldots,a_p)$, $p=0,\ldots,K-1$. Schematically, the arrangement is
\begin{center}
	\begin{tabular}{c|cccc}
		& pos. $1$ & pos. $2$ & $\ldots$ & pos. $K$ \\\hline
		$V_0$ & $a_1$ & $a_2$ & $\ldots$ & $a_K$ \\
		$V_1$ & $a_2$ & $a_3$ & $\ldots$ & $a_1$ \\
		$\vdots$ & $\vdots$ & $\vdots$ && $\vdots$ \\
		$V_{K-1}$ & $a_K$ & $a_1$ & $\ldots$ & $a_{K-1}$
	\end{tabular}
\end{center}
where every entry is relabelled onto the set indicated by its row. Every row has value of type $r^S$, since cyclic shifts of a word have conjugate values

Now consider one column. It contains exactly one relabelled copy of each factor $a_1,\ldots,a_K$, so its resource vector is $Q\mvec$, and these factors have disjoint supports. For each column separately, partition its $K$ entries into $Q$ groups, each containing $m_\ell$ cycles of length $\ell$ for every $\ell$. Since they act on disjoint supports, the product over each group is a permutation of cycle type $\lambda$. Thus the product over each column can be presented as a product of type $\lambda$ generators.

Denote the generators obtained from column $j$ by $f_{j,1},\ldots,f_{j,Q}$, and concatenate the columns in order $1,\ldots,K$. Define
\[
	g = (f_{1,1}\cdots f_{1,Q})(f_{2,1}\cdots f_{2,Q})\cdots(f_{K,1}\cdots f_{K,Q}).
\]
Expanding these generators into constituent factors $a_i$, each row's factors appear in their original order. Factors from different words commute, so we can collect each row's factors together without changing their order or the product. Thus $g$ equals the product of $K$ values of words $w_0,\ldots,w_{K-1}$, which produces a permutation of cycle type $r^{KS}$.

We thus get the claimed result with $t_r=KQ$ and $K_r=KS$, because $\frac{t_r}{K_r} = \frac{KQ}{KS} = \frac{1}{\Gamma_r}$. The number of letters used is $N_r=K|V|$.
\end{proof}

Denote
\begin{equation}
	\label{eq:common-slope}
	\alpha_\lambda = \max\left( \frac{1}{d}, \max_{2\leqslant r < b} \frac{1}{r\Gamma_r} \right).
\end{equation}
Here the inner maximum is omitted if $b=2$. The positive number $\alpha_\lambda$ is rational by \cref{rmk:gamma-r-positive-rational}.

For $g\in S_n$, let $c(g)$ count all cycles, including fixed points, and denote by $\tau(g)=n-c(g)$ its transposition length. Define also, for $2\leqslant r < b$,
\[
	p_r = \begin{cases}
		2, & \text{$d$ even and $r$ even,} \\
		1, & \text{otherwise.}
	\end{cases}
\]
For $n\geqslant b+1$ and $rs\leqslant n$, a permutation of type $r^s$ belongs to $G_n$ whenever $p_r\mid s$.

\begin{lemma}
	\label{lemma:uniform-lower-certificate}
	There is a constant $\gamma_\lambda$ such that $D_n\geqslant \alpha_\lambda n -\gamma_\lambda$ for all $n\geqslant b+1$.
\end{lemma}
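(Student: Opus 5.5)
Since $\alpha_\lambda$ is the maximum of finitely many quantities, it suffices to prove separately, for each term, a bound of the form $D_n\geqslant(\text{term})\cdot n-\gamma$ valid for all $n\geqslant b+1$, and then take $\gamma_\lambda$ to be the largest of these finitely many constants. In each case we exhibit an explicit element of $G_n$ and bound its length $L_n$ from below.

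\emph{The term $1/d$.} Each generator has transposition length exactly $d$. Since $\tau$ is subadditive ($\tau(gh)\leqslant\tau(g)+\tau(h)$), any $g$ satisfies $L_n(g)\geqslant\tau(g)/d$. If $d$ is odd, then $G_n=S_n$ and we take an $n$-cycle, which has $\tau=n-1$. If $d$ is even, then $G_n=A_n$ and we take an $n$-cycle when $n$ is odd, or an $(n-1)$-cycle when $n$ is even. Either way $\tau(g)\geqslant n-2$, so $D_n\geqslant (n-2)/d$.

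\emph{The term $1/(r\Gamma_r)$ for $2\leqslant r<b$.} Let $s$ be the largest multiple of $p_r$ with $rs\leqslant n$, so that $s\geqslant n/r-1-p_r$, i.e.\ $s\geqslant n/r-3$. By the remark preceding the lemma, a permutation $g$ of type $r^s$ lies in $G_n$, since $n\geqslant b+1$ and $p_r\mid s$. Suppose $g$ is written as a product of $q=L_n(g)$ generators; note $q\geqslant1$ because $s\geqslant1$ once $n$ is large, and small $n$ can be absorbed into the constant. The first part of \cref{lemma:sharp-bound} gives $s\leqslant q\Gamma_r$, hence
\[
D_n\geqslant q\geqslant \frac{s}{\Gamma_r}\geqslant \frac{n}{r\Gamma_r}-\frac{3}{\Gamma_r}.
\]
By \cref{rmk:gamma-r-positive-rational}, $\Gamma_r>0$, so the constant $3/\Gamma_r$ is finite.

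\emph{Assembling the constant.} We set $\gamma_\lambda=\max\bigl(2/d,\ \max_r 3/\Gamma_r\bigr)$, enlarged so that it also covers the finitely many small $n$ where $s=0$. For those $n$ the claimed bound holds trivially, since $D_n\geqslant 0$ and $\alpha_\lambda n-\gamma_\lambda\leqslant 0$ there.

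The only delicate step is checking that the chosen $g$ actually lies in $G_n$. The parity remark before the lemma handles this, and the padding by $p_r$ costs only a constant. Beyond that, everything follows directly from \cref{lemma:sharp-bound}, so I expect no real obstacle.
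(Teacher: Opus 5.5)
Your proof is correct and follows the paper's argument: the same odd-length $n$- or $(n-1)$-cycle handles the $1/d$ term, and the same choice $s=p_r\lfloor n/(rp_r)\rfloor$, combined with the first part of \cref{lemma:sharp-bound}, handles the $1/(r\Gamma_r)$ terms. The differences are cosmetic. Since in fact $s>n/r-p_r$, the paper gets the sharper constant $p_r/\Gamma_r$ where you have $3/\Gamma_r$. Also, no enlargement is needed when $s=0$: then $n<rp_r$, so the right-hand side $n/(r\Gamma_r)-p_r/\Gamma_r$ is already negative.
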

\begin{proof}
The group $G_n$ contains an even permutation $g_n$ with $\tau(g_n)\geqslant n-2$: an $n$-cycle when $n$ is odd, or an $(n-1)$-cycle when $n$ is even. Since every generator has transposition length $d$, one gets $D_n \geqslant L_n(g_n) \geqslant (n-2)/d$.

Fix $2\leqslant r<b$. Choose $s=p_r\lfloor n/(r p_r)\rfloor$; then cycle type $r^s$ is possible on $n$ letters, and $rs>n-r p_r$. When $s\geqslant 1$, \cref{lemma:sharp-bound} gives
\[
	D_n \geqslant L_n(r^s) \geqslant \frac{s}{\Gamma_r} \geqslant \frac{n}{r\Gamma_r} - \frac{p_r}{\Gamma_r}.
\]
If $s=0$, this bound still holds trivially. Thus one can take
\[
	\gamma_\lambda = \max\left( \frac{2}{d}, \max_{2\leqslant r < b} \frac{p_r}{\Gamma_r} \right),
\]
where the inner maximum is omitted in case $b=2$.
\end{proof}

\begin{lemma}
	\label{lemma:long-cycle-peeling}
	Let $g\in G_n$ contain a cycle of length $\ell \geqslant b$. There is a generator $f\in \cc_{\lambda,n}$, supported within that cycle, such that $gf^{-1}$ replaces it by one cycle of length $\ell-d$ and $d$ fixed points.
\end{lemma}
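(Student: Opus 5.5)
Since cutting the cycle is local, the plan is to work inside it. Label the cycle $C=(1,2,\ldots,\ell)$, with $g$ sending $i\mapsto i+1$ indices modulo $\ell$, and build $f$ supported on $\{1,\ldots,\ell\}$ so that $gf^{-1}$ agrees with $g$ off this set. Then the only thing to check is what $gf^{-1}$ does on $\{1,\ldots,\ell\}$. Equivalently, write $g|_C = (g|_C f^{-1})\cdot f$. We need $g|_C f^{-1}$ to be an $(\ell-d)$-cycle on $\ell-d$ of these letters, fixing the remaining $d$. In other words, we need to factor the $\ell$-cycle $C$ as $C = C'\cdot f$, where $C'$ is an $(\ell-d)$-cycle and $f$ has type $\lambda$, all inside the $\ell$ letters. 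Note that $f^{-1}$ also has type $\lambda$, so it does not matter which of $f$ or $f^{-1}$ is called the generator.

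The construction is the standard splitting of a cycle into consecutive blocks. For an $\ell$-cycle, $(1,2,\ldots,\ell) = (1,\ldots,a)(a,a+1,\ldots,\ell)$ (up to the multiplication convention). Iterating this, $C$ is the product of cycles on consecutive overlapping blocks of lengths $\lambda_1,\ldots,\lambda_h$ and one remaining cycle. Lengths add with overlaps of one letter, so $\sum(\lambda_i-1) + (\ell' - 1) = \ell-1$, giving $\ell'=\ell-d$. However, those block cycles share endpoints, so they are not disjoint and their product is not of type $\lambda$. To fix this, I would use the nested or parallel splitting instead.

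\begin{itemize}
\item Take disjoint arcs $I_1,\ldots,I_h$ of $C$, of sizes $\lambda_1,\ldots,\lambda_h$. This is possible since $b\le\ell$.
\item Choose $f^{-1}$ to be the product of the cycles that traverse each arc in reverse order, $(x_{\lambda_i},\ldots,x_1)$ for $I_i=\{x_1,\ldots,x_{\lambda_i}\}$ consecutive in $C$.
\item Verify that multiplying $C$ by the reversed cycle on a consecutive arc of length $k$ collapses that arc: it fixes $k-1$ of its points and shortens $C$ by $k-1$, leaving one cycle.
\end{itemize}

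Because the arcs are disjoint, the effects compose independently. The result is one cycle of length $\ell-\sum(\lambda_i-1)=\ell-d$ together with $d$ fixed points.

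The step needing care is the verification with the correct composition convention. Depending on left or right action, one must use either the arc cycle $(x_1,\ldots,x_k)$ or its inverse. I would check one arc explicitly: with $g(x_j)=x_{j+1}$ and $f^{-1}$ chosen so that $f^{-1}(x_{j+1})=x_j$ cyclically on the arc, $gf^{-1}$ fixes $x_2,\ldots,x_k$ and sends $x_1\mapsto g(x_k)$, the successor of the arc. This is compatible across disjoint arcs, and the case $\ell=b$ (arcs tiling $C$) still gives a single cycle of length $\ell-d=h$, possibly length $1$ when $h=1$, which is fine. Finally, $f$ is supported within the cycle and has type $\lambda$ in $S_n$, so $f\in\cc_{\lambda,n}$.
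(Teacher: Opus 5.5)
Your proposal is correct and is essentially the paper's proof. The paper also takes $f$ to be the product of the forward-read cycles on disjoint consecutive blocks of lengths $\lambda_1,\ldots,\lambda_h$, choosing the blocks to tile $\{1,\ldots,b\}$. Then $gf^{-1}$ collapses each block to its first letter and leaves the cycle $(y_1,\ldots,y_h,b+1,\ldots,\ell)$ of length $\ell-d$.

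One small correction to your hedge about composition conventions: it is unnecessary. $gf^{-1}$ and $f^{-1}g$ are conjugate, so the same $f$ (not its inverse) gives the right cycle type under either convention.
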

\begin{proof}
It can be assumed without loss of generality that $g$ is a single cycle of length $\ell$ ($f$ will commute with other cycles of $g$, so parity of $g$ is not an issue), and, after relabelling, $g=(1,2,\ldots,\ell)$. Split the first $b$ entries into consecutive blocks of lengths $\lambda_1,\ldots,\lambda_h$. Let $f$ be the product of the cycles obtained by reading each block in order, i.e.
\[
	f = (1,\ldots,\lambda_1)(\lambda_1+1,\ldots,\lambda_1+\lambda_2)\ldots(b-\lambda_h+1,\ldots,b).
\]
Note that $f\in \cc_{\lambda,n}$. Let $y_1,\ldots,y_h$ be the first labels of the blocks. Then
\[
	gf^{-1} = (y_1,y_2,\ldots,y_h,b+1,\ldots,\ell).
\]
This is a cycle of length $h+\ell-b = \ell-d$.
\end{proof}
\begin{remark}
	\label{rmk:consecutive-peeling}
	Each application shortens the surviving cycle by $d$, and another application is possible exactly when its length is at least $b$. Thus after
	\begin{equation}
		\label{eq:q-ell}
		\nu(\ell) = 1+\lfloor (\ell-b)/d \rfloor
	\end{equation}
	applications the cycle of length $\ell\geqslant b$ can be transformed into a cycle of length below $b$.
\end{remark}

Choose a positive integer $P$ divisible by
\[
	d, \qquad rK_r \ \text{for}\ 2\leqslant r<b, \qquad \text{the denominator of $\alpha_\lambda$.}
\]
The numbers $K_r,t_r,N_r$ are provided by \cref{lemma:sharp-bound}. Put
\[
	T=P/d, \quad q=\alpha_\lambda P.
\]
Then $T\leqslant q$, because $\alpha_\lambda\geqslant 1/d$, and
\[
	\frac{P t_r}{r K_r} = \frac{P}{r\Gamma_r} \leqslant q.
\]
\begin{lemma}
	\label{lemma:uniform-upper-recurrence}
	For all sufficiently large $n$
	\[
		D_n \leqslant D_{n-P} + q.
	\]
\end{lemma}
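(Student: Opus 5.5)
Fix $g\in G_n$ with $n$ large; the plan is to show $L_n(g)\leqslant D_{n-P}+q$. To do this, we find a word $u$ of at most $q$ generators such that $gu^{-1}$ fixes at least $P$ letters and lies in $G_n$. Then $gu^{-1}$ is a permutation of the remaining $n-P$ letters lying in the corresponding $G_{n-P}$ (same parity, and $n-P\geqslant b+1$). Its length in $\cc_{\lambda,n-P}$ is at most $D_{n-P}$, and generators on $n-P$ letters embed as generators on $n$ letters. Hence $L_n(g)\leqslant D_{n-P}+q$.

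Finding $u$ uses a pigeonhole on cycle structure. Let $n$ be large compared with $P$, $b$, and the $N_r$. Then either the cycles of length $\geqslant b$ have total length at least some threshold $C$, or some $r$ with $2\leqslant r<b$ has at least $C'$ cycles of length $r$. This holds because the fixed points number at most $n-P$ unless $g$ already fixes $P$ letters, in which case we take $u$ empty.

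\emph{Case 1: long cycles.} We apply \cref{lemma:long-cycle-peeling} repeatedly, $T=P/d$ times in total, possibly to different long cycles. Each application adds exactly $d$ fixed points. After $T$ steps we have $P$ new fixed points using $T\leqslant q$ generators. This works because each peel only requires the current cycle to have length $\geqslant b$. Counting via \cref{rmk:consecutive-peeling}, a cycle of length $\ell$ supports at least $(\ell-b)/d$ peels, so a total long length of $C\geqslant T(b+d)+\dots$ suffices. Parity is automatic, since we multiply by elements of $G_n$.

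\emph{Case 2: many $r$-cycles.} We need $P/r$ of the $r$-cycles of $g$; this is an integer multiple of $K_r$ because $rK_r\mid P$. Group them into $P/(rK_r)$ blocks of $K_r$ cycles. For each block we use the factorization from \cref{lemma:sharp-bound}(2) of a permutation of type $r^{K_r}$ into $t_r$ generators. We relabel it so that its $r^{K_r}$ part coincides with the chosen block, and so that its $N_r-rK_r$ auxiliary letters lie in fixed points of $g$. Distinct blocks must use disjoint auxiliary sets. Multiplying by the inverse of this product kills the block, and the auxiliary letters, which were fixed by the product, stay fixed.

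Only $rK_r$ letters are freed per block, so $P$ letters are freed in total. The cost is $Pt_r/(rK_r)\leqslant q$ generators. Since $q$ may exceed the cost and we need exactly at most $q$, an upper bound is all that is required.

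The main obstacle is guaranteeing enough fixed points of $g$ to host the auxiliary letters in Case 2. If $g$ has few fixed points, the auxiliary letters must instead be taken from other cycles. This is handled by conjugating the factorization so that the auxiliary letters sit on letters of \emph{other} cycles of $g$. The value of the factorization is only the $r^{K_r}$ part, acting as identity off the block, so it does not matter what $g$ does on those letters. The requirement is merely that the auxiliary letters be disjoint from the block and from the other blocks, which needs only $n\geqslant P+\sum(N_r)$-type bounds. With this observation, the choice of thresholds for "sufficiently large $n$" becomes a routine counting argument, and the lemma follows.
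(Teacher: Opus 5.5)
Your proposal is correct and follows the paper's proof essentially step for step: the same three cases (at least $P$ fixed points; $T=P/d$ applications of \cref{lemma:long-cycle-peeling}; $P/r$ short $r$-cycles removed in blocks of $K_r$ via \cref{lemma:sharp-bound}), the same counting bound on $n$ when all three fail, and the same resolution of the auxiliary-letter issue, namely that each block word's value fixes everything off the block, so $n\geqslant N_r$ suffices (you do not even need disjoint auxiliary sets across blocks, since the values simply multiply). Two small slips to tidy: if $g$ does not fix $P$ letters it has fewer than $P$ fixed points (not ``at most $n-P$''), and the peel count should use $\nu(\ell)\geqslant\max\bigl(1,(\ell-b)/d\bigr)\geqslant \ell/(b+d)$, since the bound $(\ell-b)/d$ alone fails when there are many long cycles of length close to $b$; this inequality is what justifies your threshold $C=T(b+d)$.
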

\begin{proof}
We prove this lemma by providing a procedure to obtain $P$ fixed points with at most $q$ multiplications by generators. Note that $G_n \cap S_{n-P} = G_{n-P}$ (provided $n-P\geqslant b+1$), so (up to relabelling) obtaining $P$ fixed points is the same as passing to $G_{n-P}$.

For a target $g\in G_n$, let $Q_\mathrm{long}(g)$ be the sum of $\nu(\ell)$ as defined by \cref{eq:q-ell} over its cycles of length at least $b$. Let $c_r(g)$ denote the number of $r$-cycles of $g$ (in particular, $c_1(g)$ is the number of fixed points). Consider the following three cases.

First, if $Q_\mathrm{long}(g)\geqslant T$, apply exactly $T$ steps provided by \cref{lemma:long-cycle-peeling}, distributed among the long cycles. This shortens these long cycles and introduces at least $dT=P$ new fixed points.

Second, if $c_1(g) \geqslant P$, $g$ is already in a copy of $G_{n-P}$.

Third, suppose that $c_r(g) \geqslant P/r$ for some $r\in\{2,\ldots,b-1\}$. Choose exactly $P/r$ of those cycles. Since $rK_r \mid P$, split them into $P/(rK_r)$ groups of $K_r$ cycles. By \cref{lemma:sharp-bound}, each group is the value of a word of $t_r$ generators, so these cycles can be removed by a total of $\frac{P}{rK_r} t_r \leqslant q$ generators. Each such word uses $N_r$ labels, but since the value of the word only moves the selected cycles and fixes all other letters, the condition $n\geqslant N_r$ suffices for each such realization.

We now show that for sufficiently large $n$, one of these three cases must occur. Assume the contrary:
\[
	c_1(g) \leqslant P-1, \quad c_r(g)\leqslant P/r-1\ \text{for}\ 2\leqslant r < b, \quad Q_\mathrm{long}(g) \leqslant T-1.
\]
Every original long cycle of length $\ell$ has $\ell - d\nu(\ell) \in [b-d,b-1]$, and hence $\ell \leqslant d\nu(\ell)+b-1$. Each such cycle contributes at least $1$ to $Q_\mathrm{long}(g)$, so the total support of long cycles is at most $(d+b-1)(T-1)$. Thus, summing the lengths of cycles of $g$, one gets
\[
	n \leqslant N_\mathrm{bad} = (P-1) + \sum_{r=2}^{b-1} (P-r) + (d+b-1)(T-1).
\]
This means that for every
\[
	n \geqslant \max\left( b+1+P, N_\mathrm{bad}+1, \max_{2\leqslant r < b} N_r \right)
\]
one of the cases occurs, and every reduction procedure works.

Thus it is proven that $L_n(g) \leqslant D_{n-P} + q$, and taking the maximum over all $g$ completes the proof.
\end{proof}

\begin{proof}[Proof of \cref{thm:diameters-affine-porc}]
Fix one residue $u\in\{0,\ldots,P-1\}$ and write $n=u+kP$. For all sufficiently large $k$, \cref{lemma:uniform-upper-recurrence} shows that
\[
	F_u(k) = D_{u+kP} - kq \quad \text{satisfies} \quad F_u(k+1)\leqslant F_u(k).
\]
Each $F_u(k)$ is an integer, and \cref{lemma:uniform-lower-certificate} gives the uniform lower bound
\[
	F_u(k) \geqslant \alpha_\lambda (u+kP) - \gamma_\lambda - kq = \alpha_\lambda u - \gamma_\lambda,
\]
because $q=\alpha_\lambda P$. An integer sequence that is eventually non-increasing and bounded below eventually becomes constant.

There are $P$ residue classes, so a single $N$ works for all of them and $D_{n+P} = D_n+q$ for $n\geqslant N$.
\end{proof}
\begin{remark}
	The above proof also establishes that the slope of the affine PORC $D_n$ equals $q/P = \alpha_\lambda$.
\end{remark}

\section{Unions of conjugacy classes}
\label{sec:conjugacy-classes-unions}
The argument proving \cref{thm:diameters-affine-porc} can be extended to finite fixed unions of conjugacy classes.
\begin{corollary}
	\label{cor:classes-unions}
	Fix non-trivial cycle types $\lambda^{(1)},\ldots,\lambda^{(v)}$, each independent of $n$. Then the diameters of Cayley graphs relative to the union of the conjugacy classes $\cc_{\lambda^{(i)},n}$ are eventually affine PORC. 
\end{corollary}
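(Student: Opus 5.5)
The plan is to rerun the proof of \cref{thm:diameters-affine-porc} with the generating set $\cc_n=\bigcup_i \cc_{\lambda^{(i)},n}$. The only change is that a word in generators now has to record how many generators of each type it uses. Write $\mvec^{(i)}$, $d_i$ and $b_i$ for the data of $\lambda^{(i)}$. Let $J$ be the union of the cycle lengths occurring in any $\lambda^{(i)}$, and put $b=\max_i b_i$.

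\textbf{Setting up the optimization.} For $n\geqslant b+1$ the group $G_n=\langle\cc_n\rangle$ is again $A_n$ or $S_n$: it is $S_n$ exactly when some $d_i$ is odd. The constituent-word machinery, \cref{lemma:baseline_word}, \cref{lemma:connected-word-defect}, \cref{lemma:dickson} and \cref{lemma:finite-domination}, depends only on $J$ and $r$, so it gives the same finite dictionary $\dict_r$. I will replace \cref{eq:gamma-r} by
\[
\Gamma_r=\max\Big\{\textstyle\sum_A s_Ax_A \ \Big|\ x_A\geqslant 0,\ y_i\geqslant0,\ \sum_i y_i=1,\ \sum_A\cvec^Ax_A=\sum_i y_i\mvec^{(i)}\Big\}.
\]
This is a rational linear program over a bounded nonempty polytope, so as in \cref{rmk:gamma-r-positive-rational} its optimum is positive and rational.

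\textbf{The two parts of \cref{lemma:sharp-bound}.}

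\emph{Lower bound.} Take a factorization into $q_i$ generators of type $i$, with $q=\sum_i q_i$. Set $y_i=q_i/q$ and $x_A=z_A/q$. This gives a feasible point, so $s\leqslant q\Gamma_r$.

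\emph{Attainment.} Take a rational optimum $(x^*,y^*)$ and clear denominators with $Q$, so that $Qy_i^*$ are integers $Q_i$. Concatenating dictionary words gives a constituent word with resource vector $\sum_iQ_i\mvec^{(i)}$. The cyclic-shift table then works verbatim: each column contains one copy of every factor on disjoint supports, so it can be partitioned into $Q_i$ groups of type $\lambda^{(i)}$ for each $i$. This is the only genuinely new point, and it holds because the column's resource vector is exactly $\sum_iQ_i\mvec^{(i)}$ and the supports are disjoint.

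\textbf{Slope, lower bound and peeling.} I will set
\[
\alpha=\max\Big(\tfrac1{d_{\max}},\ \max_{2\leqslant r<b}\tfrac1{r\Gamma_r}\Big),\qquad d_{\max}=\max_i d_i .
\]
For the lower bound of \cref{lemma:uniform-lower-certificate}, any generator has transposition length at most $d_{\max}$, which gives the $n/d_{\max}$ term. The $r^s$ term is as before. Parity bookkeeping is unchanged: $p_r=2$ exactly when all $d_i$ are even and $r$ is even.

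For peeling, I apply \cref{lemma:long-cycle-peeling} with a generator of type $i_0$ achieving $d_{i_0}=d_{\max}$. This works on any cycle of length at least $b_{i_0}$. Hence cycles of length at least $b$ can be peeled, and cycles shorter than $b$ are handled by the third case.

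\textbf{Recurrence and conclusion.} Choose $P$ divisible by $d_{\max}$, by each $rK_r$, and by the denominator of $\alpha$, and set $q=\alpha P$. The three-case argument of \cref{lemma:uniform-upper-recurrence} then goes through with $d$ replaced by $d_{\max}$, with thresholds taken relative to $b=\max_i b_i$. One must still verify $G_n\cap S_{n-P}=G_{n-P}$, which holds since both sides are the alternating or full group as determined by the same parity data. The final monotone-bounded argument is identical.

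I expect the main obstacle to be getting the lower bound and the upper recurrence to produce the same slope $\alpha$. The mixed-generator linear program is designed so that they do. The only remaining subtlety is peeling long cycles when generator types have different $b_i$; this is resolved by using the type maximizing $d_i$ and treating lengths in $[b_{i_0},b)$ as either peelable or short.
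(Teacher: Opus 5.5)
Your proposal is correct and follows the paper's proof essentially step for step. It uses the same shared dictionary $\dict_r$ and the same mixed linear program with type proportions $y_i$. It partitions each column of the cyclic-shift table into $Q_i$ generators of type $\lambda^{(i)}$, takes the slope $\max\bigl(1/d_{\max},\max_{2\leqslant r<b}1/(r\Gamma_r)\bigr)$, and peels with a type attaining $d_{\max}$ above the threshold $b=\max_i b_i$; the only addition is your explicit check of $G_n\cap S_{n-P}=G_{n-P}$, which the paper leaves implicit.
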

\begin{proof}
Let $J$ be the union of their constituent cycle lengths, let $\mvec^{(j)}$ be the resource vector of $\lambda^{(j)}$, and let $b_* = \max_j \sum_\ell \ell m^{(j)}_\ell$. Denote $d_j = \sum_\ell (\ell-1)m^{(j)}_\ell$ and $d_* = \max_j d_j$.

\cref{lemma:baseline_word,lemma:connected-word-defect,lemma:dickson,lemma:finite-domination} depend only on $J$ and the chosen target length $r$, so they give the same finite dictionary $\dict_r$ for this union. \Cref{eq:gamma-r} can be replaced with
\begin{equation}
	\label{eq:gamma-r-union}
	\Gamma_r^{\cup} = \max\left\{ \sum_{A\in\dict_r} s_A x_A \ \middle|\ x_A\geqslant 0,\ y_j\geqslant 0,\ \sum_{j=1}^v y_j=1,\ \sum_{A\in\dict_r} \cvec^A x_A = \sum_{j=1}^v y_j \mvec^{(j)} \right\}.
\end{equation}
Here $y_j$ is the proportion of generators of type $\lambda^{(j)}$ (hence $\sum_j y_j=1$). The right hand side of the last equality is bounded, so the feasibility domain is a non-empty compact rational polytope.

Both parts of \cref{lemma:sharp-bound} also extend. For a word of $t\geqslant 1$ generators, with $q_j$ of type $\lambda^{(j)}$, divide its resource vector by $t$ and take $y_j=q_j/t$. The same argument gives $s\leqslant t \Gamma_r^{\cup}$.  Conversely, clear the denominators of a maximizing solution, putting $q_j=Qy_j$. Each column in the cyclic-rotation construction then has resource vector $\sum_j q_j \mvec^{(j)}$. Its disjoint cycles can be partitioned into $q_j$ generators of type $\lambda^{(j)}$. We therefore obtain finite factorizations of permutations of cycle type $r^{K_r}$ in $N_r$ letters using $t_r$ generators, with $t_r/K_r = 1/\Gamma_r^{\cup}$.

Set
\begin{equation}
	\label{eq:common-slope-union}
	\alpha_* = \max\left( \frac{1}{d_*}, \max_{2\leqslant r < b_*} \frac{1}{r\Gamma_r^{\cup}} \right).
\end{equation}
In \cref{lemma:uniform-lower-certificate}, every generator now has transposition length at most $d_*$. For the short-cycle targets, take $p_r=2$ when every $d_j$ is even and $r$ is even, and $p_r=1$ otherwise. The same proof gives $D_n \geqslant \alpha_* n-\gamma_*$ for some constant $\gamma_*$.

For \cref{lemma:long-cycle-peeling}, choose a type $j$ with $d_j=d_*$. Since its support size is at most $b_*$, every cycle of length at least $b_*$ can be shortened by $d_*$ using one generator of this type.

The rest of the proof is the same as in the single conjugacy class case.
\end{proof}

Consider now the following general problem: given a pair of generating sequences $X_n, Y_n$ such that the corresponding diameter sequences $D^X_n,D^Y_n$ are quasipolynomial, is it true that the union $X_n\cup Y_n$ produces a quasipolynomial diameter sequence? It is rather easy to construct counter-examples. Below we outline some of them:
\begin{enumerate}
\item Consider a binary heap tree $B_n$ on $n$ vertices $1,\ldots,n$, where the vertex $i$ has $2i$ for its left child and $2i+1$ for its right child (when those labels are at most $n$). Consider the following orders on $\{1,\ldots,n\}$: $p_n$ traverses $B_n$ depth-first, visiting the left subtree first, and $q_n$ visits the right subtree first. Denote
\[
	X_n = \big\{ (p_n(i),p_n(i+1)) \mid i=1,\ldots,n-1 \big\}, \quad
	Y_n = \big\{ (q_n(i),q_n(i+1)) \mid i=1,\ldots,n-1 \big\}.
\]
Up to relabelling, those are just the usual Coxeter generators for $S_n$, so $D^X_n=D^Y_n = \binom{n}{2}$. However, the union $X_n\cup Y_n$ contains the set of transpositions along the edges of a binary heap tree, and it is known to have diameters $D^B_n = \Theta(n\log n)$. A lower bound of the same growth rate can be obtained by considering a related token swapping problem \cite{TokenSwappingOnTrees}.
\item Denote by $T_n$ the set of all transpositions in $S_n$, and for $m=\lfloor n/2\rfloor$ take $A,B\subset\{1,\ldots,n\}$ such that $|A|=|B|=m$ and $|A\setminus B|=|B\setminus A| = k$ for some prescribed $3\leqslant k\leqslant\lfloor m/2\rfloor$. Denote $X_n = T_n \cup S_A$ and $Y_n = T_n\cup S_B$, where $S_A,S_B<S_n$ are symmetric subgroups supported on $A$ and $B$. Then $D^X_n=D^Y_n = \lceil n/2\rceil+1$, while $D^{X\cup Y}_n = \lceil n/2\rceil -k(n)+3$, so choosing suitable $k(n)$ makes this sequence fail to be eventually quasipolynomial.
\item One can even make conjugation-invariant examples. Let
\[
	U_n = S_n \setminus \cc_{(2),n}, \quad V_n = S_n \setminus \cc_{(3),n}.
\]
One trivially has $D^U_n=D^V_n=2$ eventually. Now take $X_n=U_n$, and set $Y_n = U_n$ on squares and $V_n$ otherwise. Then $D^{X\cup Y}=2$ on squares and $1$ otherwise.
\end{enumerate}

\section{Explicit diameters for small $n$}
\label{sec:explicit-diameters}
For any given cycle type (or, more generally, a finite union of cycle types) one can calculate the exact diameter value for a given $n$ using character theory\footnote{The implementation can be found at \href{https://github.com/special-linear/conjugacy_bfs_characters}{\texttt{github.com/special-linear/conjugacy\_bfs\_characters}}.}. The data for some cases where the exact answer has not been established can be found in \cref{table:diameters,table:diameters-unions}.

The main observation suggested by these tables are:
\begin{itemize}
\item The computed values demonstrate that the affine PORC behavior often begins at a small degree, so the starting points of eventual quasipolynomials are significantly overestimated in \cref{thm:diameters-affine-porc,cor:classes-unions}. In practice the diameters usually appear to follow affine formulas from the first or second admissible degree.
\item In many cases  of interest it is easy to come up with very simple (conjectural) explicit formulas. In other cases the formulas appear to be manageable but less simple. For example:
\begin{itemize}
	\item All permutations supported on exactly $k$ letters:
	\[
		\begin{cases}
			\left\lceil \frac{n}{k-1} \right\rceil, & \text{$k$ odd,} \\[5pt]
			\left\lceil \frac{n}{k-1} \right\rceil + [n \equiv k-1 \pmod{2(k-1)}] & \text{$k$ even.}
		\end{cases}
	\]
	\item All cycles of length at most $k$:
	\[
		\begin{cases}
			\lceil (n-1)/3 \rceil + [n\equiv 4 \pmod{6}], & k = 5, \\
			\lceil n/4 \rceil + [n\equiv 4 \pmod{8}], & k = 6, \\
			\lceil (n-1)/4 \rceil, & k = 7, \\
			\lceil (n-1)/5 \rceil + [n\equiv 6 \pmod{10}], & k = 8.
		\end{cases}
	\]
	A general formula working for all $k$ remains unclear.
	\item Involutions supported on at most $2k$ letters, i.e. of types $2^m$, $m\leqslant k$: $\left\lceil \frac{n-1}{k} \right\rceil$.
\end{itemize}
\item It appears that for many cycle types the determination of the common slope in the corresponding affine PORC is much more direct than through \cref{eq:common-slope,eq:common-slope-union} and solving the corresponding optimization problem (\cref{eq:gamma-r,eq:gamma-r-union}) on dictionaries of choice (which are not immediate to construct). It appears, for example, that for the cycle type $2^a{\cdot}3^b{\cdot}4^c$ the common slope is $\frac{1}{q}$, where $q=\min\big(a+2b+3c,2(a+b+c)\big)$. For $2^a{\cdot}3^b$ this simplifies to $\frac{1}{a+2b}$.

However, general mixed types the common slope needs not be a reciprocal of an integer. For example, it appears to be equal to $2/9$ for $5{\cdot}2$ and to $3/16$ for $6{\cdot}2$.
\end{itemize}

\section*{Acknowledgements and AI use declaration}
The author is grateful to the CayleyPy community for many useful discussions on the subject.

The results of the present paper were obtained in close collaboration with ChatGPT, and it has also been used for a more extensive literature review and manuscript preparation and polishing. OpenAI's Codex was used extensively in the Lean formalization. The computational code used to obtain the results of \cref{sec:explicit-diameters} was written by Claude following the specification produced by a human-ChatGPT interaction.

\bibliographystyle{amsalpha}
\bibliography{conjugacy_diameter_porc.bib}

\clearpage

\begingroup
\small
\setlength{\tabcolsep}{1pt}
\setlength{\LTcapwidth}{\textwidth}
\renewcommand{\arraystretch}{1.08}
\begin{longtable}{@{}l|rrrrrrrrrrrrrrrrrrrrrrrrrrrrrrrrrrrrr@{}}
	\caption{Diameters of the identity component for fixed cycle types ($n=6$--$42$).}
	\label{table:diameters}\\
	\hline\hline
	C.t. & $\hphantom{00}\mathllap{6}$ & $\hphantom{00}\mathllap{7}$ & $\hphantom{00}\mathllap{8}$ & $\hphantom{00}\mathllap{9}$ & $\hphantom{00}\mathllap{10}$ & $\hphantom{00}\mathllap{11}$ & $\hphantom{00}\mathllap{12}$ & $\hphantom{00}\mathllap{13}$ & $\hphantom{00}\mathllap{14}$ & $\hphantom{00}\mathllap{15}$ & $\hphantom{00}\mathllap{16}$ & $\hphantom{00}\mathllap{17}$ & $\hphantom{00}\mathllap{18}$ & $\hphantom{00}\mathllap{19}$ & $\hphantom{00}\mathllap{20}$ & $\hphantom{00}\mathllap{21}$ & $\hphantom{00}\mathllap{22}$ & $\hphantom{00}\mathllap{23}$ & $\hphantom{00}\mathllap{24}$ & $\hphantom{00}\mathllap{25}$ & $\hphantom{00}\mathllap{26}$ & $\hphantom{00}\mathllap{27}$ & $\hphantom{00}\mathllap{28}$ & $\hphantom{00}\mathllap{29}$ & $\hphantom{00}\mathllap{30}$ & $\hphantom{00}\mathllap{31}$ & $\hphantom{00}\mathllap{32}$ & $\hphantom{00}\mathllap{33}$ & $\hphantom{00}\mathllap{34}$ & $\hphantom{00}\mathllap{35}$ & $\hphantom{00}\mathllap{36}$ & $\hphantom{00}\mathllap{37}$ & $\hphantom{00}\mathllap{38}$ & $\hphantom{00}\mathllap{39}$ & $\hphantom{00}\mathllap{40}$ & $\hphantom{00}\mathllap{41}$ & $\hphantom{00}\mathllap{42}$ \\
	\hline
	\endfirsthead
	\multicolumn{38}{c}{\tablename\ \thetable{} -- continued} \\
	\hline\hline
	C.t. & $\hphantom{00}\mathllap{6}$ & $\hphantom{00}\mathllap{7}$ & $\hphantom{00}\mathllap{8}$ & $\hphantom{00}\mathllap{9}$ & $\hphantom{00}\mathllap{10}$ & $\hphantom{00}\mathllap{11}$ & $\hphantom{00}\mathllap{12}$ & $\hphantom{00}\mathllap{13}$ & $\hphantom{00}\mathllap{14}$ & $\hphantom{00}\mathllap{15}$ & $\hphantom{00}\mathllap{16}$ & $\hphantom{00}\mathllap{17}$ & $\hphantom{00}\mathllap{18}$ & $\hphantom{00}\mathllap{19}$ & $\hphantom{00}\mathllap{20}$ & $\hphantom{00}\mathllap{21}$ & $\hphantom{00}\mathllap{22}$ & $\hphantom{00}\mathllap{23}$ & $\hphantom{00}\mathllap{24}$ & $\hphantom{00}\mathllap{25}$ & $\hphantom{00}\mathllap{26}$ & $\hphantom{00}\mathllap{27}$ & $\hphantom{00}\mathllap{28}$ & $\hphantom{00}\mathllap{29}$ & $\hphantom{00}\mathllap{30}$ & $\hphantom{00}\mathllap{31}$ & $\hphantom{00}\mathllap{32}$ & $\hphantom{00}\mathllap{33}$ & $\hphantom{00}\mathllap{34}$ & $\hphantom{00}\mathllap{35}$ & $\hphantom{00}\mathllap{36}$ & $\hphantom{00}\mathllap{37}$ & $\hphantom{00}\mathllap{38}$ & $\hphantom{00}\mathllap{39}$ & $\hphantom{00}\mathllap{40}$ & $\hphantom{00}\mathllap{41}$ & $\hphantom{00}\mathllap{42}$ \\
	\hline
	\endhead
	\hline
	\multicolumn{38}{r}{Continued on next page} \\
	\endfoot
	\hline\hline
	\endlastfoot
	$4$ & 3 & 4 & 4 & 5 & 5 & 6 & 6 & 7 & 7 & 8 & 8 & 9 & 9 & 10 & 10 & 11 & 11 & 12 & 12 & 13 & 13 & 14 & 14 & 15 & 15 & 16 & 16 & 17 & 17 & 18 & 18 & 19 & 19 & 20 & 20 & 21 & 21 \\
	$6$ & 3 & 3 & 3 & 3 & 4 & 4 & 4 & 5 & 5 & 5 & 5 & 5 & 6 & 6 & 6 & 7 & 7 & 7 & 7 & 7 & 8 & 8 & 8 & 9 & 9 & 9 & 9 & 9 & 10 & 10 & 10 & 11 & 11 & 11 & 11 & 11 & 12 \\
	$8$ & -- & -- & 3 & 3 & 3 & 3 & 4 & 4 & 4 & 4 & 4 & 5 & 5 & 5 & 5 & 5 & 6 & 6 & 6 & 6 & 7 & 7 & 7 & 7 & 7 & 7 & 8 & 8 & 8 & 8 & 8 & 9 & 9 & 9 & 9 & 9 & 10 \\
	$10$ & -- & -- & -- & -- & 3 & 3 & 3 & 3 & 3 & 4 & 4 & 4 & 4 & 4 & 4 & 5 & 5 & 5 & 5 & 5 & 5 & 6 & 6 & 6 & 6 & 6 & 6 & 7 & 7 & 7 & 7 & 7 & 7 & 8 & 8 & 8 & 8 \\
	$3{\cdot}2$ & 3 & 3 & 3 & 4 & 4 & 4 & 5 & 5 & 5 & 6 & 6 & 6 & 7 & 7 & 7 & 8 & 8 & 8 & 9 & 9 & 9 & 10 & 10 & 10 & 11 & 11 & 11 & 12 & 12 & 12 & 13 & 13 & 13 & 14 & 14 & 14 & 15 \\
	$3{\cdot}2^{2}$ & -- & 2 & 2 & 2 & 2 & 3 & 3 & 3 & 3 & 4 & 4 & 4 & 4 & 5 & 5 & 5 & 5 & 6 & 6 & 6 & 6 & 7 & 7 & 7 & 7 & 8 & 8 & 8 & 8 & 9 & 9 & 9 & 9 & 10 & 10 & 10 & 10 \\
	$3{\cdot}2^{3}$ & -- & -- & -- & 3 & 3 & 3 & 3 & 4 & 4 & 4 & 4 & 4 & 5 & 5 & 5 & 5 & 5 & 6 & 6 & 6 & 6 & 6 & 7 & 7 & 7 & 7 & 7 & 8 & 8 & 8 & 8 & 8 & 9 & 9 & 9 & 9 & 9 \\
	$3{\cdot}2^{4}$ & -- & -- & -- & -- & -- & 2 & 2 & 2 & 2 & 3 & 3 & 3 & 3 & 3 & 3 & 4 & 4 & 4 & 4 & 4 & 4 & 5 & 5 & 5 & 5 & 5 & 5 & 6 & 6 & 6 & 6 & 6 & 6 & 7 & 7 & 7 & 7 \\
	$3{\cdot}2^{5}$ & -- & -- & -- & -- & -- & -- & -- & 3 & 3 & 3 & 3 & 4 & 4 & 4 & 4 & 4 & 4 & 4 & 5 & 5 & 5 & 5 & 5 & 5 & 5 & 6 & 6 & 6 & 6 & 6 & 6 & 6 & 7 & 7 & 7 & 7 & 7 \\
	$3^{2}$ & 3 & 2 & 2 & 2 & 3 & 3 & 3 & 3 & 4 & 4 & 4 & 4 & 5 & 5 & 5 & 5 & 6 & 6 & 6 & 6 & 7 & 7 & 7 & 7 & 8 & 8 & 8 & 8 & 9 & 9 & 9 & 9 & 10 & 10 & 10 & 10 & 11 \\
	$3^{2}{\cdot}2$ & -- & -- & 3 & 3 & 3 & 3 & 3 & 4 & 4 & 4 & 4 & 4 & 5 & 5 & 5 & 5 & 5 & 6 & 6 & 6 & 6 & 6 & 7 & 7 & 7 & 7 & 7 & 8 & 8 & 8 & 8 & 8 & 9 & 9 & 9 & 9 & 9 \\
	$3^{2}{\cdot}2^{2}$ & -- & -- & -- & -- & 2 & 2 & 2 & 2 & 2 & 3 & 3 & 3 & 3 & 3 & 3 & 4 & 4 & 4 & 4 & 4 & 4 & 5 & 5 & 5 & 5 & 5 & 5 & 6 & 6 & 6 & 6 & 6 & 6 & 7 & 7 & 7 & 7 \\
	$3^{2}{\cdot}2^{3}$ & -- & -- & -- & -- & -- & -- & 3 & 3 & 3 & 3 & 3 & 4 & 4 & 4 & 4 & 4 & 4 & 4 & 5 & 5 & 5 & 5 & 5 & 5 & 5 & 6 & 6 & 6 & 6 & 6 & 6 & 6 & 7 & 7 & 7 & 7 & 7 \\
	$3^{2}{\cdot}2^{4}$ & -- & -- & -- & -- & -- & -- & -- & -- & 2 & 2 & 2 & 2 & 2 & 3 & 3 & 3 & 3 & 3 & 3 & 3 & 3 & 4 & 4 & 4 & 4 & 4 & 4 & 4 & 4 & 5 & 5 & 5 & 5 & 5 & 5 & 5 & 5 \\
	$3^{2}{\cdot}2^{5}$ & -- & -- & -- & -- & -- & -- & -- & -- & -- & -- & 3 & 3 & 3 & 3 & 3 & 4 & 4 & 4 & 4 & 4 & 4 & 4 & 4 & 4 & 5 & 5 & 5 & 5 & 5 & 5 & 5 & 5 & 5 & 6 & 6 & 6 & 6 \\
	$3^{3}$ & -- & -- & -- & 3 & 3 & 2 & 2 & 2 & 3 & 3 & 3 & 3 & 3 & 3 & 4 & 4 & 4 & 4 & 4 & 4 & 5 & 5 & 5 & 5 & 5 & 5 & 6 & 6 & 6 & 6 & 6 & 6 & 7 & 7 & 7 & 7 & 7 \\
	$3^{3}{\cdot}2$ & -- & -- & -- & -- & -- & 3 & 3 & 3 & 3 & 3 & 3 & 4 & 4 & 4 & 4 & 4 & 4 & 4 & 5 & 5 & 5 & 5 & 5 & 5 & 5 & 6 & 6 & 6 & 6 & 6 & 6 & 6 & 7 & 7 & 7 & 7 & 7 \\
	$3^{3}{\cdot}2^{2}$ & -- & -- & -- & -- & -- & -- & -- & 2 & 2 & 2 & 2 & 2 & 2 & 3 & 3 & 3 & 3 & 3 & 3 & 3 & 3 & 4 & 4 & 4 & 4 & 4 & 4 & 4 & 4 & 5 & 5 & 5 & 5 & 5 & 5 & 5 & 5 \\
	$3^{3}{\cdot}2^{3}$ & -- & -- & -- & -- & -- & -- & -- & -- & -- & 3 & 3 & 3 & 3 & 3 & 3 & 4 & 4 & 4 & 4 & 4 & 4 & 4 & 4 & 4 & 5 & 5 & 5 & 5 & 5 & 5 & 5 & 5 & 5 & 6 & 6 & 6 & 6 \\
	$4{\cdot}2$ & 2 & 2 & 2 & 2 & 2 & 3 & 3 & 3 & 3 & 4 & 4 & 4 & 4 & 5 & 5 & 5 & 5 & 6 & 6 & 6 & 6 & 7 & 7 & 7 & 7 & 8 & 8 & 8 & 8 & 9 & 9 & 9 & 9 & 10 & 10 & 10 & 10 \\
	$4{\cdot}2^{2}$ & -- & -- & 3 & 3 & 3 & 3 & 3 & 4 & 4 & 4 & 4 & 4 & 5 & 5 & 5 & 5 & 5 & 6 & 6 & 6 & 6 & 6 & 7 & 7 & 7 & 7 & 7 & 8 & 8 & 8 & 8 & 8 & 9 & 9 & 9 & 9 & 9 \\
	$4{\cdot}2^{3}$ & -- & -- & -- & -- & 2 & 2 & 2 & 2 & 2 & 3 & 3 & 3 & 3 & 3 & 3 & 4 & 4 & 4 & 4 & 4 & 4 & 5 & 5 & 5 & 5 & 5 & 5 & 6 & 6 & 6 & 6 & 6 & 6 & 7 & 7 & 7 & 7 \\
	$4{\cdot}2^{5}$ & -- & -- & -- & -- & -- & -- & -- & -- & 2 & 2 & 2 & 2 & 2 & 3 & 3 & 3 & 3 & 3 & 3 & 3 & 3 & 4 & 4 & 4 & 4 & 4 & 4 & 4 & 4 & 5 & 5 & 5 & 5 & 5 & 5 & 5 & 5 \\
	$4{\cdot}3{\cdot}2$ & -- & -- & -- & 2 & 2 & 2 & 2 & 2 & 2 & 3 & 3 & 3 & 3 & 3 & 4 & 4 & 4 & 4 & 4 & 4 & 4 & 5 & 5 & 5 & 5 & 5 & 6 & 6 & 6 & 6 & 6 & 6 & 6 & 7 & 7 & 7 & 7 \\
	$4{\cdot}3{\cdot}2^{2}$ & -- & -- & -- & -- & -- & 3 & 3 & 3 & 3 & 3 & 3 & 4 & 4 & 4 & 4 & 4 & 4 & 4 & 5 & 5 & 5 & 5 & 5 & 5 & 5 & 6 & 6 & 6 & 6 & 6 & 6 & 6 & 7 & 7 & 7 & 7 & 7 \\
	$4{\cdot}3{\cdot}2^{3}$ & -- & -- & -- & -- & -- & -- & -- & 2 & 2 & 2 & 2 & 2 & 2 & 3 & 3 & 3 & 3 & 3 & 3 & 3 & 3 & 4 & 4 & 4 & 4 & 4 & 4 & 4 & 4 & 5 & 5 & 5 & 5 & 5 & 5 & 5 & 5 \\
	$4{\cdot}3{\cdot}2^{4}$ & -- & -- & -- & -- & -- & -- & -- & -- & -- & 3 & 3 & 3 & 3 & 3 & 3 & 4 & 4 & 4 & 4 & 4 & 4 & 4 & 4 & 4 & 5 & 5 & 5 & 5 & 5 & 5 & 5 & 5 & 5 & 6 & 6 & 6 & 6 \\
	$5{\cdot}2$ & -- & 3 & 3 & 3 & 3 & 3 & 4 & 4 & 4 & 4 & 5 & 5 & 5 & 5 & 5 & 6 & 6 & 6 & 6 & 7 & 7 & 7 & 7 & 7 & 8 & 8 & 8 & 8 & 9 & 9 & 9 & 9 & 9 & 10 & 10 & 10 & 10 \\
	$5{\cdot}2^{2}$ & -- & -- & -- & 2 & 2 & 2 & 2 & 2 & 2 & 3 & 3 & 3 & 3 & 3 & 3 & 4 & 4 & 4 & 4 & 4 & 4 & 5 & 5 & 5 & 5 & 5 & 5 & 6 & 6 & 6 & 6 & 6 & 6 & 7 & 7 & 7 & 7 \\
	$5{\cdot}2^{3}$ & -- & -- & -- & -- & -- & 3 & 3 & 3 & 3 & 3 & 3 & 4 & 4 & 4 & 4 & 4 & 4 & 4 & 5 & 5 & 5 & 5 & 5 & 5 & 5 & 6 & 6 & 6 & 6 & 6 & 6 & 6 & 7 & 7 & 7 & 7 & 7 \\
	$5{\cdot}2^{4}$ & -- & -- & -- & -- & -- & -- & -- & 2 & 2 & 2 & 2 & 2 & 2 & 3 & 3 & 3 & 3 & 3 & 3 & 3 & 3 & 4 & 4 & 4 & 4 & 4 & 4 & 4 & 4 & 5 & 5 & 5 & 5 & 5 & 5 & 5 & 5 \\
	$5{\cdot}2^{5}$ & -- & -- & -- & -- & -- & -- & -- & -- & -- & 3 & 3 & 3 & 3 & 3 & 3 & 4 & 4 & 4 & 4 & 4 & 4 & 4 & 4 & 4 & 5 & 5 & 5 & 5 & 5 & 5 & 5 & 5 & 5 & 6 & 6 & 6 & 6 \\
	$5{\cdot}3$ & -- & -- & 2 & 2 & 2 & 2 & 3 & 3 & 3 & 3 & 4 & 4 & 4 & 4 & 4 & 4 & 5 & 5 & 5 & 5 & 5 & 6 & 6 & 6 & 6 & 6 & 7 & 7 & 7 & 7 & 8 & 8 & 8 & 8 & 8 & 8 & 9 \\
	$5{\cdot}3{\cdot}2$ & -- & -- & -- & -- & 3 & 3 & 3 & 3 & 3 & 3 & 4 & 4 & 4 & 4 & 4 & 4 & 5 & 5 & 5 & 5 & 5 & 5 & 5 & 5 & 6 & 6 & 6 & 6 & 6 & 6 & 6 & 7 & 7 & 7 & 7 & 7 & 7 \\
	$5{\cdot}4$ & -- & -- & -- & 3 & 3 & 3 & 3 & 3 & 4 & 4 & 4 & 4 & 4 & 4 & 5 & 5 & 5 & 5 & 5 & 5 & 6 & 6 & 6 & 6 & 6 & 6 & 7 & 7 & 7 & 7 & 7 & 7 & 8 & 8 & 8 & 8 & 8 \\
	$5{\cdot}4{\cdot}2$ & -- & -- & -- & -- & -- & 2 & 2 & 2 & 2 & 2 & 2 & 2 & 3 & 3 & 3 & 3 & 3 & 3 & 4 & 4 & 4 & 4 & 4 & 4 & 4 & 4 & 4 & 5 & 5 & 5 & 5 & 5 & 5 & 6 & 6 & 6 & 6 \\
	$5{\cdot}4{\cdot}3$ & -- & -- & -- & -- & -- & -- & 3 & 3 & 3 & 3 & 3 & 3 & 4 & 4 & 4 & 4 & 4 & 4 & 4 & 4 & 5 & 5 & 5 & 5 & 5 & 5 & 5 & 5 & 6 & 6 & 6 & 6 & 6 & 6 & 6 & 6 & 7 \\
	$5{\cdot}4{\cdot}3{\cdot}2$ & -- & -- & -- & -- & -- & -- & -- & -- & 2 & 2 & 2 & 2 & 2 & 2 & 2 & 2 & 3 & 3 & 3 & 3 & 3 & 3 & 3 & 3 & 3 & 3 & 4 & 4 & 4 & 4 & 4 & 4 & 4 & 4 & 4 & 4 & 5 \\
\end{longtable}

\endgroup

\clearpage

\begingroup
\footnotesize
\setlength{\tabcolsep}{1pt}
\setlength{\LTcapwidth}{\textwidth}
\renewcommand{\arraystretch}{1.08}
\begin{longtable}{@{}l|rrrrrrrrrrrrrrrrrrrrrrrrrrrrrrrrrrrrr@{}}
	\caption{Diameters of the identity component for unions of conjugacy classes ($n=6$--$42$).}
	\label{table:diameters-unions}\\
	\hline\hline
	C.t. & $\hphantom{00}\mathllap{6}$ & $\hphantom{00}\mathllap{7}$ & $\hphantom{00}\mathllap{8}$ & $\hphantom{00}\mathllap{9}$ & $\hphantom{00}\mathllap{10}$ & $\hphantom{00}\mathllap{11}$ & $\hphantom{00}\mathllap{12}$ & $\hphantom{00}\mathllap{13}$ & $\hphantom{00}\mathllap{14}$ & $\hphantom{00}\mathllap{15}$ & $\hphantom{00}\mathllap{16}$ & $\hphantom{00}\mathllap{17}$ & $\hphantom{00}\mathllap{18}$ & $\hphantom{00}\mathllap{19}$ & $\hphantom{00}\mathllap{20}$ & $\hphantom{00}\mathllap{21}$ & $\hphantom{00}\mathllap{22}$ & $\hphantom{00}\mathllap{23}$ & $\hphantom{00}\mathllap{24}$ & $\hphantom{00}\mathllap{25}$ & $\hphantom{00}\mathllap{26}$ & $\hphantom{00}\mathllap{27}$ & $\hphantom{00}\mathllap{28}$ & $\hphantom{00}\mathllap{29}$ & $\hphantom{00}\mathllap{30}$ & $\hphantom{00}\mathllap{31}$ & $\hphantom{00}\mathllap{32}$ & $\hphantom{00}\mathllap{33}$ & $\hphantom{00}\mathllap{34}$ & $\hphantom{00}\mathllap{35}$ & $\hphantom{00}\mathllap{36}$ & $\hphantom{00}\mathllap{37}$ & $\hphantom{00}\mathllap{38}$ & $\hphantom{00}\mathllap{39}$ & $\hphantom{00}\mathllap{40}$ & $\hphantom{00}\mathllap{41}$ & $\hphantom{00}\mathllap{42}$ \\
	\hline
	\endfirsthead
	\multicolumn{38}{c}{\tablename\ \thetable{} -- continued} \\
	\hline\hline
	C.t. & $\hphantom{00}\mathllap{6}$ & $\hphantom{00}\mathllap{7}$ & $\hphantom{00}\mathllap{8}$ & $\hphantom{00}\mathllap{9}$ & $\hphantom{00}\mathllap{10}$ & $\hphantom{00}\mathllap{11}$ & $\hphantom{00}\mathllap{12}$ & $\hphantom{00}\mathllap{13}$ & $\hphantom{00}\mathllap{14}$ & $\hphantom{00}\mathllap{15}$ & $\hphantom{00}\mathllap{16}$ & $\hphantom{00}\mathllap{17}$ & $\hphantom{00}\mathllap{18}$ & $\hphantom{00}\mathllap{19}$ & $\hphantom{00}\mathllap{20}$ & $\hphantom{00}\mathllap{21}$ & $\hphantom{00}\mathllap{22}$ & $\hphantom{00}\mathllap{23}$ & $\hphantom{00}\mathllap{24}$ & $\hphantom{00}\mathllap{25}$ & $\hphantom{00}\mathllap{26}$ & $\hphantom{00}\mathllap{27}$ & $\hphantom{00}\mathllap{28}$ & $\hphantom{00}\mathllap{29}$ & $\hphantom{00}\mathllap{30}$ & $\hphantom{00}\mathllap{31}$ & $\hphantom{00}\mathllap{32}$ & $\hphantom{00}\mathllap{33}$ & $\hphantom{00}\mathllap{34}$ & $\hphantom{00}\mathllap{35}$ & $\hphantom{00}\mathllap{36}$ & $\hphantom{00}\mathllap{37}$ & $\hphantom{00}\mathllap{38}$ & $\hphantom{00}\mathllap{39}$ & $\hphantom{00}\mathllap{40}$ & $\hphantom{00}\mathllap{41}$ & $\hphantom{00}\mathllap{42}$ \\
	\hline
	\endhead
	\hline
	\multicolumn{38}{r}{Continued on next page} \\
	\endfoot
	\hline\hline
	\endlastfoot
	supp $\leqslant 3$ & 3 & 3 & 4 & 4 & 5 & 5 & 6 & 6 & 7 & 7 & 8 & 8 & 9 & 9 & 10 & 10 & 11 & 11 & 12 & 12 & 13 & 13 & 14 & 14 & 15 & 15 & 16 & 16 & 17 & 17 & 18 & 18 & 19 & 19 & 20 & 20 & 21 \\
	supp $\leqslant 4$ & 2 & 2 & 3 & 3 & 3 & 4 & 4 & 4 & 5 & 5 & 5 & 6 & 6 & 6 & 7 & 7 & 7 & 8 & 8 & 8 & 9 & 9 & 9 & 10 & 10 & 10 & 11 & 11 & 11 & 12 & 12 & 12 & 13 & 13 & 13 & 14 & 14 \\
	supp $\leqslant 5$ & 2 & 2 & 2 & 2 & 3 & 3 & 3 & 3 & 4 & 4 & 4 & 4 & 5 & 5 & 5 & 5 & 6 & 6 & 6 & 6 & 7 & 7 & 7 & 7 & 8 & 8 & 8 & 8 & 9 & 9 & 9 & 9 & 10 & 10 & 10 & 10 & 11 \\
	supp $\leqslant 6$ & 1 & 2 & 2 & 2 & 2 & 2 & 3 & 3 & 3 & 3 & 3 & 4 & 4 & 4 & 4 & 4 & 5 & 5 & 5 & 5 & 5 & 6 & 6 & 6 & 6 & 6 & 7 & 7 & 7 & 7 & 7 & 8 & 8 & 8 & 8 & 8 & 9 \\
	supp $\leqslant 7$ & -- & 1 & 2 & 2 & 2 & 2 & 2 & 2 & 3 & 3 & 3 & 3 & 3 & 3 & 4 & 4 & 4 & 4 & 4 & 4 & 5 & 5 & 5 & 5 & 5 & 5 & 6 & 6 & 6 & 6 & 6 & 6 & 7 & 7 & 7 & 7 & 7 \\
	supp $\leqslant 8$ & -- & -- & 1 & 2 & 2 & 2 & 2 & 2 & 2 & 2 & 3 & 3 & 3 & 3 & 3 & 3 & 3 & 4 & 4 & 4 & 4 & 4 & 4 & 4 & 5 & 5 & 5 & 5 & 5 & 5 & 5 & 6 & 6 & 6 & 6 & 6 & 6 \\
	supp $= 4$ & 2 & 3 & 3 & 3 & 4 & 4 & 4 & 5 & 5 & 5 & 6 & 6 & 6 & 7 & 7 & 7 & 8 & 8 & 8 & 9 & 9 & 9 & 10 & 10 & 10 & 11 & 11 & 11 & 12 & 12 & 12 & 13 & 13 & 13 & 14 & 14 & 14 \\
	supp $= 5$ & 2 & 2 & 2 & 3 & 3 & 3 & 4 & 4 & 4 & 4 & 4 & 5 & 5 & 5 & 6 & 6 & 6 & 6 & 6 & 7 & 7 & 7 & 8 & 8 & 8 & 8 & 8 & 9 & 9 & 9 & 10 & 10 & 10 & 10 & 10 & 11 & 11 \\
	supp $= 6$ & 2 & 2 & 2 & 2 & 2 & 3 & 3 & 3 & 3 & 3 & 4 & 4 & 4 & 4 & 4 & 5 & 5 & 5 & 5 & 5 & 6 & 6 & 6 & 6 & 6 & 7 & 7 & 7 & 7 & 7 & 8 & 8 & 8 & 8 & 8 & 9 & 9 \\
	supp $= 7$ & -- & 2 & 2 & 2 & 2 & 2 & 2 & 3 & 3 & 3 & 3 & 3 & 4 & 4 & 4 & 4 & 4 & 4 & 4 & 5 & 5 & 5 & 5 & 5 & 6 & 6 & 6 & 6 & 6 & 6 & 6 & 7 & 7 & 7 & 7 & 7 & 8 \\
	supp $= 8$ & -- & -- & 2 & 2 & 2 & 2 & 2 & 2 & 2 & 3 & 3 & 3 & 3 & 3 & 3 & 3 & 4 & 4 & 4 & 4 & 4 & 4 & 4 & 5 & 5 & 5 & 5 & 5 & 5 & 5 & 6 & 6 & 6 & 6 & 6 & 6 & 6 \\
	cycles $\leqslant 4$ & 3 & 3 & 4 & 4 & 5 & 5 & 6 & 6 & 7 & 7 & 8 & 8 & 9 & 9 & 10 & 10 & 11 & 11 & 12 & 12 & 13 & 13 & 14 & 14 & 15 & 15 & 16 & 16 & 17 & 17 & 18 & 18 & 19 & 19 & 20 & 20 & 21 \\
	cycles $\leqslant 5$ & 2 & 2 & 3 & 3 & 4 & 4 & 4 & 4 & 5 & 5 & 6 & 6 & 6 & 6 & 7 & 7 & 8 & 8 & 8 & 8 & 9 & 9 & 10 & 10 & 10 & 10 & 11 & 11 & 12 & 12 & 12 & 12 & 13 & 13 & 14 & 14 & 14 \\
	cycles $\leqslant 6$ & 2 & 2 & 2 & 3 & 3 & 3 & 4 & 4 & 4 & 4 & 4 & 5 & 5 & 5 & 6 & 6 & 6 & 6 & 6 & 7 & 7 & 7 & 8 & 8 & 8 & 8 & 8 & 9 & 9 & 9 & 10 & 10 & 10 & 10 & 10 & 11 & 11 \\
	cycles $\leqslant 7$ & -- & 2 & 2 & 2 & 3 & 3 & 3 & 3 & 4 & 4 & 4 & 4 & 5 & 5 & 5 & 5 & 6 & 6 & 6 & 6 & 7 & 7 & 7 & 7 & 8 & 8 & 8 & 8 & 9 & 9 & 9 & 9 & 10 & 10 & 10 & 10 & 11 \\
	cycles $\leqslant 8$ & -- & -- & 2 & 2 & 2 & 2 & 3 & 3 & 3 & 3 & 4 & 4 & 4 & 4 & 4 & 4 & 5 & 5 & 5 & 5 & 6 & 6 & 6 & 6 & 6 & 6 & 7 & 7 & 7 & 7 & 8 & 8 & 8 & 8 & 8 & 8 & 9 \\
	invol. $\leqslant 4$ & 3 & 3 & 4 & 4 & 5 & 5 & 6 & 6 & 7 & 7 & 8 & 8 & 9 & 9 & 10 & 10 & 11 & 11 & 12 & 12 & 13 & 13 & 14 & 14 & 15 & 15 & 16 & 16 & 17 & 17 & 18 & 18 & 19 & 19 & 20 & 20 & 21 \\
	invol. $\leqslant 6$ & 2 & 2 & 3 & 3 & 3 & 4 & 4 & 4 & 5 & 5 & 5 & 6 & 6 & 6 & 7 & 7 & 7 & 8 & 8 & 8 & 9 & 9 & 9 & 10 & 10 & 10 & 11 & 11 & 11 & 12 & 12 & 12 & 13 & 13 & 13 & 14 & 14 \\
	invol. $\leqslant 8$ & -- & -- & 2 & 2 & 3 & 3 & 3 & 3 & 4 & 4 & 4 & 4 & 5 & 5 & 5 & 5 & 6 & 6 & 6 & 6 & 7 & 7 & 7 & 7 & 8 & 8 & 8 & 8 & 9 & 9 & 9 & 9 & 10 & 10 & 10 & 10 & 11 \\
	invol. $\leqslant 10$ & -- & -- & -- & -- & 2 & 2 & 3 & 3 & 3 & 3 & 3 & 4 & 4 & 4 & 4 & 4 & 5 & 5 & 5 & 5 & 5 & 6 & 6 & 6 & 6 & 6 & 7 & 7 & 7 & 7 & 7 & 8 & 8 & 8 & 8 & 8 & 9 \\
	invol. $\leqslant 12$ & -- & -- & -- & -- & -- & -- & 2 & 2 & 3 & 3 & 3 & 3 & 3 & 3 & 4 & 4 & 4 & 4 & 4 & 4 & 5 & 5 & 5 & 5 & 5 & 5 & 6 & 6 & 6 & 6 & 6 & 6 & 7 & 7 & 7 & 7 & 7 \\
	even cyc. $\leqslant 4$ & 3 & 4 & 4 & 5 & 5 & 6 & 6 & 7 & 7 & 8 & 8 & 9 & 9 & 10 & 10 & 11 & 11 & 12 & 12 & 13 & 13 & 14 & 14 & 15 & 15 & 16 & 16 & 17 & 17 & 18 & 18 & 19 & 19 & 20 & 20 & 21 & 21 \\
	even cyc. $\leqslant 6$ & 3 & 3 & 3 & 3 & 4 & 4 & 4 & 5 & 5 & 5 & 5 & 5 & 6 & 6 & 6 & 7 & 7 & 7 & 7 & 7 & 8 & 8 & 8 & 9 & 9 & 9 & 9 & 9 & 10 & 10 & 10 & 11 & 11 & 11 & 11 & 11 & 12 \\
	even cyc. $\leqslant 8$ & -- & -- & 3 & 3 & 3 & 3 & 4 & 4 & 4 & 4 & 4 & 5 & 5 & 5 & 5 & 5 & 6 & 6 & 6 & 6 & 7 & 7 & 7 & 7 & 7 & 7 & 8 & 8 & 8 & 8 & 8 & 9 & 9 & 9 & 9 & 9 & 10 \\
	odd cyc. $\leqslant 5$ & 2 & 2 & 3 & 3 & 3 & 4 & 4 & 4 & 5 & 5 & 6 & 6 & 6 & 6 & 7 & 7 & 7 & 8 & 8 & 8 & 9 & 9 & 10 & 10 & 10 & 10 & 11 & 11 & 11 & 12 & 12 & 12 & 13 & 13 & 14 & 14 & 14 \\
	odd cyc. $\leqslant 7$ & -- & 2 & 2 & 2 & 2 & 3 & 3 & 3 & 3 & 4 & 4 & 4 & 4 & 5 & 5 & 5 & 5 & 6 & 6 & 6 & 6 & 7 & 7 & 7 & 7 & 8 & 8 & 8 & 8 & 9 & 9 & 9 & 9 & 10 & 10 & 10 & 10 \\
	odd cyc. $\leqslant 9$ & -- & -- & -- & 2 & 2 & 2 & 2 & 2 & 3 & 3 & 3 & 3 & 3 & 4 & 4 & 4 & 4 & 4 & 4 & 4 & 5 & 5 & 5 & 5 & 5 & 6 & 6 & 6 & 6 & 6 & 6 & 6 & 7 & 7 & 7 & 7 & 7 \\
\end{longtable}

\endgroup

\end{document}